


 \documentclass[11pt]{lms}


\usepackage{amsrefs}


\usepackage{epsfig}  		
\usepackage{epic,eepic}       
\usepackage{graphicx}

\usepackage{enumerate}
\usepackage{mathbbol}
\usepackage{amssymb}
\usepackage{braket}
\usepackage{longtable}
\usepackage[colorlinks]{hyperref}
\usepackage{caption}
\usepackage{mathtools}
\usepackage{microtype}
\usepackage{lineno}

\DeclareSymbolFontAlphabet{\mathbb}{AMSb}
\DeclareSymbolFontAlphabet{\mathbbl}{bbold}




\newtheorem{lemma}{Lemma}[section]
\newtheorem{theorem}[lemma]{Theorem}
\newtheorem{corollary}[lemma]{Corollary}
\newtheorem{proposition}[lemma]{Proposition}

\newtheorem{fact}[lemma]{Fact}

\newtheorem{remark}[lemma]{Remark}

\newtheorem{question}{Question}




\newtheorem{example}{Example}
\newtheorem{definition}[lemma]{Definition}









\numberwithin{equation}{section}


\newcommand{\Z}{\mathbb{Z}}
\newcommand{\R}{\mathbb{R}}  
\newcommand{\C}{\mathbb{C}}
\newcommand{\Q}{\mathbb{Q}}


\newcommand{\bs}{\backslash}

\newcommand\CC{{\mathcal C}}

\newcommand\FF{{\mathcal F}}

\newcommand\II{{\mathcal I}}
\newcommand\JJ{{\mathcal J}}

\newcommand\LL{{\mathcal L}}




\newcommand\<{\langle}
\renewcommand\>{\rangle}

\newcommand{\tp}{\mathrm{tp}}

\newcommand{\Ra}{\Rightarrow}

\newcommand{\La}{\Leftarrow}

\def\Ind#1#2{#1\setbox0=\hbox{$#1x$}\kern\wd0\hbox to 0pt{\hss$#1\mid$\hss}
\lower.9\ht0\hbox to 0pt{\hss$#1\smile$\hss}\kern\wd0}

\def\notind#1#2{#1\setbox0=\hbox{$#1x$}\kern\wd0
\hbox to 0pt{\mathchardef\nn=12854\hss$#1\nn$\kern1.4\wd0\hss}
\hbox to 0pt{\hss$#1\mid$\hss}\lower.9\ht0 \hbox to 0pt{\hss$#1\smile$\hss}\kern\wd0}





\def\includeE#1{{\lhook\kern-3.5pt\joinrel\smash{
    \mathop{\longrightarrow}\limits^{#1}}}}

\def\efor/{Example~\ref{E4}}

\def\BL/{Baldwin--Lachlan}
\def\Bu/{Buechler}
\def\Hr/{Hrushovski}
\def\lm/{locally modular}
\def\wm/{weakly minimal}
\def\nm/{non--modular}
\def\ss/{superstable}
\def\ud/{unidimensional}
\def\sm/{strongly minimal}

\def\abar{\bar{a}}

\def\bbar{\bar{b}}
\def\cbar{\bar{c}}
\def\dbar{\bar{d}}

\def\hbar{\bar{h}}

\def\mbar{\bar{m}}
\def\nbar{\bar{n}}

\def\pbar{\bar{p}}

\def\xbar{\bar{x}}
\def\ybar{\bar{y}}
\def\zbar{\bar{z}}

\def\tr/{trivial}
\def\nt/{non--trivial}
\def\st/{strong type}

\def\abar{\bar{a}}
\def\bbar{\bar{b}}
\def\cbar{\bar{c}}
\def\dbar{\bar{d}}

\def\C{{\mathfrak  C}}

\def\Q{{\mathbb Q}}

\def\Z{{\mathbb Z}}

\def\Fa0{{\FF^a_{\aleph_0}}}

\def\<{\langle}
\def\>{\rangle}

\usepackage{lineno}
\newcommand\myrestriction{\mathord\restriction}
\def\mr#1{\myrestriction_{#1}}
\renewcommand{\C}{\mathfrak{C}}

\title{Indiscernibles in monadically NIP theories}
 \author{Samuel Braunfeld and Michael C. Laskowski}
 \classno{03C45}
 \extraline{This paper is part of a project that has received funding from the 
	European Research Council (ERC) under the European Union's Horizon 2020 
	research and innovation programme (grant agreement No 810115 - Dynasnet). The first author is further supported by Project 24-12591M of the Czech Science Foundation (GA\v{C}R), and supported partly by the long-term strategic development financing of the Institute of Computer Science (RVO: 67985807). The second author is partially supported by NSF grant DMS-2154101.}

 \usepackage{mathrsfs}

\def\J{{\mathcal J}}

\begin{document}
\maketitle
	\begin{abstract}
		We prove various results around indiscernibles in monadically NIP theories. First, we provide several characterizations of monadic NIP in terms of indiscernibles, mirroring previous characterizations in terms of the behavior of finite satisfiability. Second, we study (monadic) distality in hereditary classes and complete theories. Here, via finite combinatorics, we prove a result implying that every planar graph admits a distal expansion. Finally, we prove a result implying that no monadically NIP theory interprets an infinite group, and note an example of a (monadically) stable theory with no distal expansion that does not interpret an infinite group.	
	\end{abstract}

\section{Introduction}
In the setting of hereditary classes of relational structures, the standard model-theoretic condition of an NIP theory collapses to the much stronger \emph{monadically NIP} theory, i.e. a theory that does not have the independence property under arbitrary expansions by unary predicates. Thus monadic NIP is conjectured to be an important dividing line for various combinatorial problems, such as the algorithmic tractability of first-order model checking \cite{dreir2024flip}. The manipulation of indiscernible sequences is a prominent thread in results involving monadic NIP, and this paper consists of three independent sections concerning this. The first section shows that we may provide various characterizations of monadic NIP in terms of indiscernible sequences. The second section describes  how monadic NIP interacts with distality, another model-theoretic condition characterized by the behavior of indiscernibles. One motivation for considering distality is that it also has combinatorial implications, such as the strong Erd\H os-Hajnal property (an improved form of Ramsey's theorem). Another motivation comes from the fact that in monadically NIP theories, every global invariant 1-type is either generically stable or distal, so one might hope to understand monadic NIP by understanding separately how it interacts with stability and with distality. Its interaction with stability leads to the relatively well-understood property of monadic stability, so we begin the study of its interaction with distality here. Last, a recurring theme in model theory is understanding theories by the algebraic objects, particularly groups, that can appear in them, and in the third section we show that monadically NIP theories can encode almost no algebraic structure.

In the first section, we provide various characterizations of monadic NIP in terms of indiscernible sequences, paralleling the characterizations provided in \cite{MonNIP} in terms of finite satisfiability. 

\begin{theorem} [(Theorem \ref{thm:mNIPchar})] \label{thm:mNIPcharintro}
	Let $T$ be a complete theory. Then the following are equivalent.
	\begin{enumerate}[(1)]
		\item $T$ is monadically NIP.
		\item $T$ admits widening of indiscernibles.
		\item $T$ is dp$^+$-minimal.
		\item $T$ is dp-minimal and has endless indiscernible triviality.
	\end{enumerate}
\end{theorem}

There are seemingly good reasons to desire the characterizations in terms of indiscernibility: finite satisfiability requires working over an infinite model and so seems less suited to working with classes of finite structures, where monadic stability and monadic NIP have been intensively studied. Also, one of the issues encountered in \cite{EMonNIP} studying monadic NIP in the setting of existentially closed models is that finite satisfiability is not very well-behaved there, since a finitely satisfiable type cannot necessarily be extended to one over a larger base. However, the Erd\H os-Rado theorem still allows us to obtain and manipulate indiscernible sequences there.

In the second part of this section, we show that dp$^+$-minimality, as opposed to dp-minimality, ensures that a singleton can cut an indiscernible sequence in at most one place.   This clarifies how models of a monadically NIP theory can be decomposed over an indiscernible sequence, which leads to a notion of independence paralleling finite satisfiability.

In the next section, we turn to distality in monadically NIP theories. We begin by characterizing monadic distality in terms of avoiding totally indiscernible sets of singletons (Proposition \ref{prop:distobst}) and showing that monadic distality is equivalent to monadic NIP and distality (Corollary \ref{cor:distcoll1}), mirroring a characterization of monadic stability. One of the main points of \cite{EMonNIP} is that universal theories are the right setting for many monadic model-theoretic properties, so we then study (monadic) distality in universal theories and hereditary classes of finite structures. One of the main results here is that every graph class with bounded twin-width admits a distal expansion by a linear order.

\begin{theorem} [(Theorem \ref{thm:distexp})] \label{thm:distexpintro}
	Let $\CC$ be a hereditary class of binary relational structures with bounded twin-width (for example, a planar graph class). Then $\CC$ admits a distal expansion by a linear order, and thus so does every model of $Th(\CC)_\forall$.
\end{theorem}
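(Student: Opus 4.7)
The plan is to leverage the contraction-sequence characterization of bounded twin-width to attach a linear order to each finite structure in $\CC$, verify distality of the resulting ordered theory via a distal cell decomposition of polynomial size, and transfer the conclusion to models of $Th(\CC)_\forall$ by standard embedding arguments.

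First, I would recall that a hereditary binary class has bounded twin-width iff each member admits a contraction sequence of bounded red degree. Given such a sequence for $M \in \CC$, one can order $M$ so that every part appearing at every level of the sequence is an interval of the order. Let $\CC^<$ denote the resulting hereditary class of ordered expansions; it retains bounded twin-width in the ordered sense, and is closed under induced substructures.

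Next, and this is the heart of the argument, I would establish a distal cell decomposition of polynomial size for $\CC^<$ in the sense of Chernikov--Simon. For each $M \in \CC^<$ and each definable family $\{\phi(M;b) : b \in M^{|y|}\}$, the strategy is to cover $M^{|y|}$ by cells defined using cuts of the order together with membership in the coarse parts of the contraction sequence, so that on each cell the $\phi$-trace of external points is constant and the number of cells is polynomial in the parameter set. For atomic $\phi$ this is essentially immediate from the bounded red degree, since edges between two parts are either all present, all absent, or concentrated in a bounded-degree remainder. The lift to general $\phi$ uses closure of ordered bounded twin-width under first-order transductions, due to Bonnet et al.\ (Twin-width IV). Polynomial distal cell decomposition on the finite members of $\CC^<$ then yields, by a standard compactness argument, distality of $Th(\CC^<)$ and of any ultraproduct of members of $\CC^<$.

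Finally, since any $N \models Th(\CC)_\forall$ embeds into an ultraproduct of elements of $\CC$, one transfers the distal ordered expansion of the ultraproduct back to obtain a distal expansion of $N$ by a linear order, proving the second clause. The main obstacle, and the site of the finite-combinatorial content, is producing the distal cell decomposition for arbitrary definable $\phi$: reduction to atomic formulas via transduction-closure is clean, but the quantitative polynomial bound on cell count requires a careful induction along the contraction sequence, exploiting the grid-rank structure of ordered bounded twin-width classes to control how a formula's dependence on a parameter $b$ can split across the boundaries of parts at successive levels.
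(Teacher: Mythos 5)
Your plan diverges from the paper's, which takes a much more indirect but considerably cleaner route and never constructs a cell decomposition by hand. The paper first records (citing Bonnet et al.\ and Simon) the equivalence that a hereditary binary class has bounded twin-width if and only if it admits an order-expansion $\CC^<$ that is \emph{monadically NIP}. It then observes that monadically NIP theories are dp-minimal, and a dp-minimal theory whose models carry a $\emptyset$-definable linear order is automatically distal (this is standard, quoted from Simon's book). Monadic NIP of a hereditary class transfers to every model of $Th(\CC^<)_\forall$, so every such model is distal; a compactness argument (Lemma \ref{lemma:dist exp}) then transfers the distal order-expansion to an arbitrary $N \models Th(\CC)_\forall$. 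Strong honest definitions fall out \emph{a posteriori} via Lemma \ref{lemma:dist defns}, rather than being the starting point. The paper explicitly notes that it does \emph{not} obtain the quantitative cell-count bounds you are aiming for, so a rigorous version of your plan would actually be a stronger statement.

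That said, your plan has a real gap at the step you yourself flag as the heart of the argument. The reduction from arbitrary $\phi$ to atomic formulas via transduction-closure of bounded twin-width is not as clean as you claim. Transduction-closure tells you the target class again has bounded twin-width, but it does not directly hand you a distal cell decomposition for $\phi$ in $\CC^<$: the cells in the transduced class live over a modified universe (copied, colored, cut down), and pulling them back to cells parameterized by tuples from the original structure, while keeping the cell count polynomial and the defining formula uniform across all of $\CC^<$, requires genuine additional work. The cited quantitative result of Przybyszewski carries this out only for the edge relation; the extension to arbitrary formulas with explicit bounds is, as far as this paper is concerned, open. As written you have a program for this step rather than a proof, whereas the paper's route circumvents the issue entirely by passing through monadic NIP and dp-minimality.
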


Theorem \ref{thm:distexpintro} produces distal expansions of infinite structures without having to analyze definability or indiscernible sequences in those structures, instead relying purely on finite combinatorics. As an example corollary, we obtain the following result about distal expansions of infinite graphs, which on its face is not connected with monadic NIP or with hereditary classes of finite structures.

\begin{corollary}[(Corollary \ref{cor:planar})]
	Let $G$ be an infinite planar graph. Then $G$ admits a distal expansion by a linear order.
\end{corollary}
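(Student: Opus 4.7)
The plan is to apply Theorem \ref{thm:distexp} with $\CC$ taken to be the class of all finite planar graphs, regarded as binary relational structures with a single symmetric edge relation.

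First I would check that $\CC$ satisfies the hypotheses of Theorem \ref{thm:distexp}. It is clearly hereditary, since any induced subgraph of a planar graph is planar. Moreover, $\CC$ has bounded twin-width by the theorem of Bonnet--Kim--Thomass\'e--Watrigant referenced parenthetically in the statement of Theorem \ref{thm:distexp}.

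Second I would verify that the infinite planar graph $G$ is a model of $Th(\CC)_\forall$. A universal sentence holds in $G$ iff it holds in every finite substructure of $G$. Each such finite substructure is a finite subgraph of the planar graph $G$, hence itself planar, hence a member of $\CC$, and therefore satisfies every sentence of $Th(\CC)_\forall$. So does $G$.

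Applying Theorem \ref{thm:distexp} then immediately yields that $G$, as a model of $Th(\CC)_\forall$, admits a distal expansion by a linear order. The only nontrivial ingredient is the bounded twin-width of finite planar graphs, which is external to this paper and whose role is already isolated in Theorem \ref{thm:distexp}; I therefore expect no substantive obstacle, as the corollary is essentially a direct specialization of that theorem to the planar case.
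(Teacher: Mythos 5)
Your proof is correct and is exactly the argument the paper intends: the corollary is stated as an immediate specialization of Theorem \ref{thm:distexp} to $\CC$ the hereditary class of finite planar graphs, using that finite planar graphs have bounded twin-width and that $G\models Th(\CC)_\forall$ since every finite induced subgraph of $G$ is planar. Nothing to add.
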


In the final section, we show that monadically NIP theories cannot interpret an infinite group, although we prove a more general result.  

\begin{proposition}  [(Proposition \ref{prop:nointerp})] \label{prop:intronointerp}
	Let $T$ be a theory with finite dp-rank and endless indiscernible triviality. Then $T$ does not trace-define an infinite cancellative magma.
\end{proposition}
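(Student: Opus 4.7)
The plan is to argue by contradiction: suppose $T$ trace-defines an infinite cancellative magma $(M,*)$, meaning there is an assignment $g \mapsto a_g$ from $M$ into finite tuples of a monster model $\mathcal{U} \models T$ and a formula $\phi(x,y,z)$ in the language of $T$ such that $g_1 * g_2 = g_3$ iff $\mathcal{U} \models \phi(a_{g_1}, a_{g_2}, a_{g_3})$.

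The first step is to extract combinatorial structure. By the Erd\H{o}s--Rado theorem (which is available in the trace-definition setting since we only need to indiscernibilize the tuples $(a_g)_{g \in M}$), pass to an infinite $S \subseteq M$ such that $(a_g)_{g \in S}$ is order-indiscernible over $\emptyset$. The elements of $S$ are pairwise distinct, and by left- and right-cancellation, for each fixed $h$ the maps $g \mapsto h * g$ and $g \mapsto g * h$ are injective on $S$, so every row and every column of the multiplication table $(g * h)_{g,h \in S}$ consists of distinct magma elements.

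The core step is to argue that cancellation forces too much independent information to fit inside finite dp-rank together with endless indiscernible triviality. Concretely, I plan to refine $S$ further (again by Erd\H{o}s--Rado) so that the $2$-dimensional family $(a_{g * h})_{g,h \in S}$ is array-indiscernible and mutually indiscernible with $(a_g)_{g \in S}$ along either coordinate. For any further parameter $a_k$ with $k \in M$, the formula $\phi(a_k, y, z)$ nontrivially pairs rows with columns of this array via left-multiplication by $k$, and the pairing is injective in both arguments by cancellation. Varying $k$ over a carefully chosen finite subset of $M$, I aim to produce singleton parameters each of which cuts a fixed indiscernible sequence in a way consistent with finite dp-rank (i.e., boundedly many cuts), but whose \emph{joint} cutting behavior encodes the multiplication relation and hence is not trivial --- directly contradicting endless indiscernible triviality, which forbids precisely this kind of ``individually simple but jointly complex'' interaction.

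The main obstacle will be converting the global algebraic property of cancellation into a local obstruction visible at the level of a single indiscernible sequence together with a bounded set of auxiliary parameters. Since a cancellative magma need not have associativity, an identity, or inverses, standard group-theoretic tools (regular actions, stabilizers, generic types) are unavailable; only the bare injectivity of one-sided translations survives. The finite dp-rank hypothesis is what forces each individual parameter's interaction with the sequence to be combinatorially simple, and the richness of trace-definition (as opposed to full interpretation) means formulas of $T$ capture only the atomic multiplication relation; making these two features collaborate to produce a witness against EIT, without accidentally needing more structure than cancellation provides, is the step I expect to require the most care.
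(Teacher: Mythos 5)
Your high-level intuition is correct: use finite dp-rank to bound how any single parameter can cut an indiscernible sequence, use endless indiscernible triviality (EIT) to upgrade several ``locally harmless'' parameters to a jointly harmless set, and then use cancellation in the magma for the contradiction. But the concrete construction you sketch --- an array-indiscernible two-dimensional family $(a_{g*h})_{g,h}$ mutually indiscernible with $(a_g)_g$ along either coordinate, with a further external parameter $a_k$ varied over some finite set --- is not pushed to the point where the contradiction with EIT is actually produced, and it introduces extra machinery (joint array/linear indiscernibility, which is not automatic from Ramsey-type extraction alone and would itself need justification). You have correctly identified where the difficulty lies, but you have not yet found the object whose existence EIT forbids.

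The missing idea is much simpler and one-dimensional. Finite dp-rank supplies a fixed bound $K$ (depending on the dp-rank and the arity $n$ of $\tau$) so that any parameter in $M^n$ partitions an indiscernible sequence $(\abar_i : i \in \Q)$ into fewer than $K$ pieces, each indiscernible over that parameter. Now take the indiscernible sequence to consist of tuples in the image $\tau(G)$ (the paper secures this by first expanding $M$ by a predicate naming $\tau(G)$, extracting the sequence in the expanded language, and then forgetting the predicate; extracting in the base language alone need not keep you inside the image). Form the left-associated product $\bbar$ of the $K$ factors $\abar_1,\dots,\abar_K$ in the magma. By the dp-rank bound there is some integer $i^* \in \set{1,\dots,K}$ lying inside an open interval $J$ with $J \cap \set{1,\dots,K} = \set{i^*}$ such that $\JJ := (\abar_j : j\in J)$ is indiscernible over $\bbar$. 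Since the original sequence is $\emptyset$-indiscernible, $\JJ$ is also indiscernible over each remaining factor $\abar_i$ with $i\neq i^*$. EIT then makes $\JJ$ indiscernible over $\bbar$ together with all the other factors at once. But now for any $j\in J\setminus\set{i^*}$, replacing $\abar_{i^*}$ by $\abar_j$ in the product gives the same element $\bbar$ by indiscernibility over those parameters, and peeling off the shared factors by cancellation forces $\abar_j = \abar_{i^*}$, a contradiction. So the ``jointly complex'' object you should be looking for is not a family of external translates $a_k$ acting on a $2$-D array; it is a single product of more consecutive terms of the indiscernible sequence than dp-rank permits cuts, played off against EIT and cancellation.
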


This is part of a project set out in \cite{trace} to identify some notion of triviality and a corresponding Zilber dichotomy for NIP theories based on trace defining an infinite group. Endless indiscernible triviality may be an appropriate such notion in a restricted setting.

Returning to distality, the most commonly known reason for an NIP theory to not admit a distal expansion is if it interprets an infinite field of finite characteristic (e.g., see \cite{aschenbrenner2022distality}). As a corollary of Proposition \ref{prop:intronointerp}, we record the fact that there is a (monadically) stable theory with no distal expansion that does not interpret an infinite group (Corollary \ref{cor:nodist}).

\section{Indiscernibles and naming parameters}
\subsection{Characterizations of monadic NIP via indiscernibles}

Before stating the relevant definitions, we give some context for this subsection, where the goal is to prove Theorem \ref{thm:mNIPcharintro}. The point of view resulting from this theorem is that we may decompose models of a monadically NIP theory into an indiscernible sequence, rather than into the $M$-f.s. (short for \emph{finitely satisfiable}) sequences used in \cite{MonNIP}. In particular,  by iterating the fact that $T$ admits widening of indiscernibles, it will follow that given any $M \prec \C$ (where $\C$ is the monster model) and Dedekind complete indiscernible sequence $\II = (\abar_i : i \in I)$ in $\C$, we may ``widen'' $\II$ to an indiscernible sequence $\II^+ = (\abar_i\bbar_i : i \in I)$ such that $M \subset \bigcup \II^+$. This is essentially the point of view given on monadically NIP theories in \cite{blumensath2011simple}, which also proves that monadically NIP theories admit widening of indiscernibles. Then dp$^+$-minimality is somewhat analogous to the f.s. dichotomy of \cite{MonNIP}, describing how a single additional point interacts with a partial decomposition. Widening of indiscernibles corresponds to \cite[Lemma 3.2]{MonNIP} that any partial decomposition can be extended to contain any additional point, and similarly iterating gives the statement that a partial decomposition can always be extended to contain any model. The last characterization is analogous to \cite[Lemma 3.3]{MonNIP} that dependence is trivial and transitive. Dp-minimality may be more transparently viewed as a transitivity statement via the following characterization: if there are indiscernible sequences $\II, \JJ$ and a singleton $b$ such that neither $\II$ nor $\JJ$ are indiscernible over $b$, then $\II$ and $\JJ$ are not mutually indiscernible.

We now define the notions appearing in Theorem \ref{thm:mNIPcharintro}. All indiscernible sequences will be assumed infinite. One can view dp-minimality as describing how  an indiscernible sequence $\II=(\abar_i:i\in I)$ can be `injured' by the addition of parameters.  

\begin{definition} \label{def:dpmin}
	A theory $T$ is \emph{dp-minimal} if for every indiscernible sequence $\II = (\abar_i : i \in \Q)$ and every singleton $b$, there is $i^* \in \R \cup \set{\pm \infty}$ such that $\II_{<i^*}$ and $\II_{>i^*}$ are mutually indiscernible over $b$.
	
	A theory $T$ is \emph{dp$^+$-minimal} if we may instead choose $i^* \in \R \cup \set{\pm \infty}$ so that  $\II_{<i^*}$ and $\II_{>i^*}$ are mutually indiscernible over $\abar_{i^*}b$ if $i^* \in \Q$, and over $b$ otherwise.
\end{definition}

The definition of dp-minimality above is equivalent to the characterization given in \cite[Theorem 4.18 $(iii)_2$]{Guide} (with $\kappa =2$ and $p$ the partial type $\set{x=x}$). The characterization works with an arbitrary infinite index set and allows one of $\II_{<i^*}$ and $\II_{>i^*}$ to be finite, in which case we only demand that the other is indiscernible over $b$. These are clearly equivalent on $\Q$-indexed sequences, so the characterization implies our definition. It suffices to show our definition implies the characterization on countable sequences, which can always be extended to $\Q$-indexed sequences, where our definition applies and produces a cut $i^*$ that also works for the characterization.

The name dp$^+$-minimality is based on the notion of strongly$^+$-dependent theories introduced in \cite{strongdep} (see Conclusion 2.11 there) and of dp$^+$-rank mentioned in \cite[Remark 4.19]{Guide}.

\begin{definition}
	A theory $T$ \emph{admits widening of indiscernibles} if for every infinite indiscernible sequence $\II=(\abar_i:i\in I)$ and every singleton $b\in\C$,
	there is a one-point extension $(J,\le)\supseteq (I,\le)$, an extension $(\abar_j:j\in J)$ of $\II$, an element $j^*\in J$ and a sequence $(b_j:j\in J)$ such that $b_{j^*}=b$
	and $(\abar_j b_j:j\in J)$ is indiscernible.
\end{definition}

\begin{definition}
    A theory $T$ has \emph{endless indiscernible triviality} if for every indiscernible sequence $\II = (\abar_i : i \in I)$ without endpoints, and for every set $B$, if $\II$ is indiscernible over each $b \in B$ then $\II$ is indiscernible over $B$.

    A theory has \emph{indiscernible triviality} if the statement above holds for arbitrary infinite indiscernible sequences (possibly with endpoints).
\end{definition}

We  recall the f.s. dichotomy and some related notions from \cite{MonNIP}.

\begin{definition} $T$ has the {\em f.s.\ dichotomy} if, for all models $M$, all finite tuples $\abar,\bbar \in \C$, if $\tp(\bbar/M\abar)$ is finitely satisfied in $M$, then for any singleton $c$,
	either $\tp(\bbar/M\abar c)$ or $\tp(\bbar c/M\abar)$ is finitely satisfied in $M$.
\end{definition}

	\begin{definition} \label{AJ} Let $M$ be a model, let $C \supseteq M$, and let  $(I,\le)$ be any linearly ordered index set.
	\begin{itemize}
		\item  Suppose $\<A_i:i\in I\>$ is any sequence of sets, indexed by $(I,\le)$.  For $J\subseteq I$, $A_J$ denotes $\bigcup_{j\in J} A_j$, and for $i^*\in I$,
		$A_{<i^*}$ denotes $\bigcup_{i<i^*} A_i$.  $A_{\le i^*}$ and $A_{>i^*}$ are defined analogously.
		\item  For $C\supseteq M$, an {\em $M$-f.s.\ sequence over $C$},  is a  sequence of sets $\<A_i:i\in I\>$ such that $\tp(A_i/A_{<i}C)$ is finitely satisfied in $M$ for every $i\in I$.
		When $C=M$ we simply say $\<A_i:i\in I\>$ is an $M$-f.s.\ sequence.
	\end{itemize}
\end{definition}

	\begin{definition}
	We call $C\supseteq M$ {\em full} if, for every $n$, every $p\in S_n(M)$ is realized in $C$.
\end{definition}

The relevance of fullness is the following stationarity result.

\begin{lemma}[({\cite[Lemma 2.12]{MonNIP}})]
Suppose $C\supseteq M$ is full and $p\in S(C)$ is finitely satisfied in $M$.  Then for any set $D\supseteq C$,
there is a unique $q\in S(D)$ extending $p$ that remains finitely satisfied over $M$.
\end{lemma}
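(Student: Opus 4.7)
The plan is to split the statement into existence and uniqueness, with uniqueness being the substantive content. For existence, observe that the collection of complete types in $S(D)$ finitely satisfied in $M$ forms a closed (hence compact) subset, and the continuous restriction map $S(D) \to S(C)$ carries this set onto the set of types in $S(C)$ finitely satisfied in $M$. Since $p$ lies in the latter by hypothesis, some $q \in S(D)$ extending $p$ is finitely satisfied in $M$. Alternatively one can build $q$ directly by Zorn's lemma, extending $p$ one formula at a time while preserving finite satisfiability in $M$.

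For uniqueness, I would first record the standard fact that any $r \in S(A)$ with $A \supseteq M$ that is finitely satisfied in $M$ is automatically $M$-invariant: if tuples $\abar_1, \abar_2$ from $A$ satisfy $\abar_1 \equiv_M \abar_2$ and $\phi(x, \abar_1) \wedge \neg\phi(x, \abar_2) \in r$, then this conjunction would have a realization $m \in M$, contradicting $\tp(\abar_1/M) = \tp(\abar_2/M)$. The critical use of fullness is that each tuple $\dbar \in D$ has an $M$-conjugate inside $C$: since $\tp(\dbar/M) \in S(M)$, fullness provides some $\cbar \in C$ realizing it.

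Now suppose $q_1, q_2 \in S(D)$ are two extensions of $p$ both finitely satisfied in $M$, and fix a formula $\phi(x, \dbar)$ with $\dbar \in D$. Choose $\cbar \in C$ with $\cbar \equiv_M \dbar$ as above. By $M$-invariance of each $q_i$, we have $\phi(x, \dbar) \in q_i$ iff $\phi(x, \cbar) \in q_i$, and since $q_i$ extends $p$ and $\cbar \in C$, the latter is equivalent to $\phi(x, \cbar) \in p$. Hence $q_1$ and $q_2$ agree on $\phi(x, \dbar)$, and since the formula was arbitrary, $q_1 = q_2$.

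I do not anticipate a serious obstacle: the argument is almost purely formal, exploiting the fact that $M$-invariance (automatic from finite satisfiability in $M$) together with fullness makes every formula over $D$ effectively a formula over $C$ modulo $q$. The only subtlety is that the invariance step requires the parameters being compared to lie in the domain of the type, and fullness is exactly what lets us replace an arbitrary $\dbar \in D$ by a suitable $\cbar \in C$ within that domain.
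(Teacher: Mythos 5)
Your proof is correct, and since the paper only cites this result from \cite{MonNIP} without reproducing an argument, there is nothing to compare against textually; but the route you take---observing that a type finitely satisfied in $M$ is $M$-invariant over its domain, and then using fullness of $C$ to replace any parameter $\dbar\in D$ by an $M$-conjugate $\cbar\in C$, reducing every formula over $D$ to one over $C$---is the standard and surely the intended one. One small remark on the existence half: the surjectivity of the restriction map from $M$-finitely-satisfiable types in $S(D)$ onto those in $S(C)$ is precisely the extension statement you are trying to prove, so invoking it as a known fact is circular as phrased; your fallback via Zorn's lemma (or, equivalently, extending the filter $\{\phi(M,\cbar):\phi(x,\cbar)\in p\}$ on $M^{|x|}$ to an ultrafilter and taking the associated global type) is the clean way to get existence, and note that existence does not use fullness at all.
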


\begin{definition}
	Fix any full $C \supset M$ and $q(\xbar) \in S(C)$ finitely satisfied in $M$. For any linear order $(I,\le)$ (even finite)
	the {\em $I$-Morley product $q^{(I)}$} is  the unique type in the variables $(\xbar_i:i\in I)$ satisfying
	$q = \tp(\xbar_i/C)$ and $\tp(\xbar_i/C\xbar_{<i})$ is finitely satisfiable in $M$ for every $i\in I$.
\end{definition}

\begin{lemma}  \label{r}  Suppose $C\supseteq M$ is full and $r(\xbar,\ybar)\in S(C)$ is finitely satisfied in $M$, and let $q(\xbar)$ be the restriction of $r(\xbar,\ybar)$ to $\xbar$.
	For any linear order $(I,\le)$ and any $(\abar_i:i\in I)$ realizing $q^{(I)}$, there are $(\bbar_i:i\in I)$ such that $(\abar_i\bbar_i:i\in I)$ realizes $r^{(I)}$.
\end{lemma}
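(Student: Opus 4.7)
The plan is to reduce to finite index sets via compactness and then induct, using the preceding stationarity lemma at each step. Let $p(\ybar_i : i \in I)$ denote the partial type over $C \cup \bigcup_{i \in I} \abar_i$ obtained by substituting $\xbar_i = \abar_i$ into each formula of $r^{(I)}$; a realization of $p$ is precisely a tuple $(\bbar_i : i \in I)$ as sought. By compactness it suffices to realize every finite fragment of $p$, and any such fragment involves only finitely many $\ybar_i$'s, with indices in some finite $J \subseteq I$. Since the restriction of $r^{(I)}$ to the coordinates in $J$ equals $r^{(J)}$ (and likewise for $q$), this reduces us to the case of finite $I$.

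For finite $I$, I would argue by induction on $|I|$. The base case is immediate: given $\abar_1 \models q$, choose any $\bbar_1$ with $\abar_1 \bbar_1 \models r$. For the inductive step, suppose $(\abar_j \bbar_j : j < i)$ has been built so as to realize $r^{(\set{1,\dots,i-1})}$, and set $C_i = C \cup \bigcup_{j<i} \abar_j \bbar_j$. By stationarity, $r \in S(C)$ has a unique extension $r^* \in S(C_i)$ finitely satisfied in $M$. Its restriction $r^* \mr \xbar$ is an extension of $q$ to $C_i$ that is finitely satisfied in $M$, so by stationarity applied to $q$ it must coincide with $\tp(\abar_i / C_i)$. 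Hence $r^*(\abar_i, \ybar)$ is a consistent partial type; any realization $\bbar_i$ of it satisfies $\tp(\abar_i \bbar_i / C_i) = r^*$, which is the required extension of $r$ finitely satisfied in $M$, completing the inductive step.

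I do not foresee a major obstacle here: the proof is essentially bookkeeping with stationarity and compactness. The only mildly subtle point is that $C_i$ need not itself be full, but the stationarity lemma as stated applies to any set containing the full set $C$, so this causes no difficulty.
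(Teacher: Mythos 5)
Your proof takes a different route from the paper, and the inductive step has a genuine gap. The paper observes that $r^{(I)}\restriction\xbar = q^{(I)}$ (since the restriction of $r^{(I)}$ to the $\xbar$-variables satisfies the defining properties of $q^{(I)}$, hence equals it by uniqueness), then takes \emph{any} realization $(\abar_i'\bbar_i':i\in I)$ of $r^{(I)}$, notes $(\abar_i':i\in I)\models q^{(I)}$, and applies a single automorphism over $C$ sending $\abar_i'\mapsto\abar_i$ for all $i$ simultaneously. All the $\bbar_i$'s are produced at once; no induction is needed.

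The gap in your inductive step is the sentence ``so by stationarity applied to $q$ it must coincide with $\tp(\abar_i/C_i)$.'' Stationarity says the f.s.\ extension of $q$ to $C_i$ is unique, so to conclude $\tp(\abar_i/C_i)=r^*\restriction\xbar$ you must first know that $\tp(\abar_i/C_i)$ is itself finitely satisfied in $M$. But $C_i = C\cup\bigcup_{j<i}\abar_j\bbar_j$, and what $q^{(I)}$ gives you is only that $\tp(\abar_i/C\abar_{<i})$ is f.s.\ in $M$; nothing controls how $\abar_i$ relates to the $\bbar_j$'s you have freely chosen. A concrete failure: in the random graph over $M=C$, take $q$ to be ``adjacent to nothing in $M$'' and $r(\xbar,\ybar)$ to say both coordinates realize $q$ and $\xbar E\ybar$. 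Then $q^{(2)}$ forces $\abar_1\not E\abar_2$, but at step $1$ nothing stops you from choosing $\bbar_1$ with $\bbar_1 E\abar_2$; then $r^*\restriction\xbar$ says ``not adjacent to $\bbar_1$'' while $\tp(\abar_2/C_2)$ says ``adjacent to $\bbar_1$,'' and $r^*(\abar_2,\ybar)$ is inconsistent. So the induction, as written, can get stuck. To salvage a stepwise construction you would have to strengthen the inductive invariant so that the tail $(\abar_j:j\ge i)$ remains an $M$-f.s.\ continuation of $(\abar_j\bbar_j:j<i)$, choosing each $\bbar_i$ with an eye on the as-yet-untouched $\abar_j$'s --- which is more work than the paper's one-line automorphism argument.
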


\begin{proof}  Choose any $(\abar_i'\bbar_i':i\in I)$ realizing $r^{(I)}$.  Then $(\abar_i':i\in I)$ also realizes $q^{(I)}$, so there is an automorphism
	$\sigma\in Aut(\C)$ fixing $C$ pointwise with $\sigma(\abar_i')=\abar_i$.  Then $(\abar_i\sigma(\bbar_i'):i\in I)$ also satisfies $r^{(I)}$.
\end{proof}

\begin{lemma} \label{useful} Suppose $M$ is a model and $A,B,C$ are possibly infinite sets.
If $\tp(A/MC)$ is finitely satisfied in $M$ then there is $C^*$ such that 
\begin{enumerate}
\item  $\tp(C^*/MA)=\tp(C/MA)$; and
\item  $\tp(AB/MC^*)$ is finitely satisfied in $M$.
\end{enumerate}
\end{lemma}

\begin{proof}  Fix enumerations $\abar$ of $A$ and $\bbar$ of $B$, and choose (disjoint) tuples of variable $\xbar^*,\ybar^*$ with
$\lg(\xbar^*)=\lg(\abar)$ and $\lg(\ybar^*)=\lg(\bbar)$.  Put
$$\Gamma(\xbar^*,\ybar^*):=\tp(\abar/MC)\cup\tp(\abar\bbar/M)$$
which is a set of formulas over $MC$.

\medskip
\noindent{\bf Claim.}  $\Gamma(\xbar^*,\ybar^*)$ is finitely satisfied in $M$.

\begin{proof}  We may assume $\Gamma$ is closed under finite conjunctions, so choose any $\phi(\xbar,\mbar,\cbar)\in\tp(A/MC)$ and
$\psi(\xbar,\ybar,\mbar)\in\tp(\abar\bbar/M)$.  We must find $(\mbar'\nbar')\in M^{\lg(\xbar)+\lg(\ybar)}$ realizing $\phi(\xbar,\mbar,\cbar)\wedge\psi(\xbar,\ybar,\mbar)$.
To find these, consider $$\theta(\xbar,\mbar,\cbar):=\phi(\xbar,\mbar,\cbar)\wedge\exists\ybar\psi(\xbar,\ybar,\mbar)$$
Note that $\theta(\xbar,\mbar,\cbar)\in\tp(\abar/MC)$.  Since $\tp(A/MC)$ is finitely satisfied in $M$, there is $\mbar'\in M^{\lg(\xbar)}$ such that
$\theta(\mbar',\mbar,\cbar)$ holds.  In particular, $\exists\ybar\psi(\mbar',\ybar,\mbar)$ holds.  Since $M\preceq\C$, there is $\nbar'\in M^{\lg(\ybar)}$ such that
$\psi(\mbar',\nbar',\mbar)$.  Thus, the tuple $(\mbar',\nbar')$ suffices.
\end{proof}

There is a complete type $q(\xbar^*,\ybar^*)\in S(MC)$ extending $\Gamma(\xbar^*,\ybar^*)$
that is finitely satisfied in $M$ (see, e.g., \cite[Fact 2.3(2)]{MonNIP}).
Choose any realization $(A_1,B_1)$ of $q$.

Since $\tp(A/MC)\subseteq q$, $\tp(A_1/MC)=\tp(A/MC)$, so choose an automorphism $\sigma\in Aut(\C)$ fixing $MC$ pointwise
with $\sigma(A_1)=A$.  Put $B_2:=\sigma(B_1)$.

Note that $AB_2$ also realizes $q$, so since $\tp(AB/M)\subseteq q$, $\tp(AB_2/M)=\tp(AB/M)$.  Hence, we also have $\tp(B_2/AM)=\tp(B/AM)$.
Choose $\delta\in Aut(\C)$ fixing $AM$ pointwise with $\delta(B_2)=B$ and put $C^*:=\delta(C)$.

We claim that this choice of $C^*$ satisfies (i) and (ii) of the Lemma.
For (i), via $\delta$ we have $\tp(C^*/AM)=\tp(C/AM)$.
For (ii),  first note that
$$\tp(ABMC^*)= \tp(AB_2MC)= \tp(A_1B_1MC)$$
with the first equality coming from $\delta$ and the second equality coming from $\sigma$.
However, $A_1B_1$ realizes $q$, which is finitely satisfied over $MC$.
Thus, by the equalities above, $\tp(AB/MC^*)$ is finitely satisfied as well.
\end{proof}

The following lemma is essentially \cite[Theorem 4.14]{blumensath2011simple}, although the f.s. dichotomy allows us to give a simple proof.

\begin{lemma}  \label{1.1}   If $T$ has the f.s. dichotomy then for every infinite indiscernible $(\abar_i:i\in I)$ where $(I, \le)$  is Dedekind complete and for every singleton $b\in \C$,
	there are $(b_i:i\in I)$ and some $i^* \in I$ such that $b_{i^*}=b$ and $(\abar_ib_i:i\in I)$ is indiscernible.
\end{lemma}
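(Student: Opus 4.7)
The plan is to realize $(\abar_i : i \in I)$ as a Morley sequence over a full model $M$ in a type $q$ finitely satisfied in $M$, use the f.s.\ dichotomy to identify the correct insertion slot $i^*$ for $b$ within $I$, and then apply Lemma~\ref{r} to widen.

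First I would reduce to the Morley case. By compactness together with indiscernibility, extend $(\abar_i : i \in I)$ downward to a longer indiscernible sequence $(\abar_i : i \in I^+)$ with many additional initial indices, and produce a full model $M$ (built as a Skolem-hull style construction from the extra initial portion, together with enough extra parameters to secure fullness) such that $(\abar_i : i \in I)$ realizes $q^{(I)}$ for $q := \tp(\abar_i/M)$, with each $\tp(\abar_i/M\abar_{<i})$ finitely satisfied in $M$. I would also arrange that $b \notin M$, which is harmless since $b$ is a single element.

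Next I would locate the insertion index $i^*$ using the f.s.\ dichotomy. For each $i \in I$, since $\tp(\abar_i/M)$ is finitely satisfied in $M$, the f.s.\ dichotomy applied with the singleton $b$ gives either (a) $\tp(\abar_i/Mb)$ or (b) $\tp(\abar_i b/M)$ is finitely satisfied in $M$. Using the stationarity lemma (uniqueness of f.s.\ extensions over a full model) together with the indiscernibility of $(\abar_i)$, I expect the set $I^R := \{i \in I : (a)\text{ holds}\}$ to form an end-segment of $I$: roughly, these are the indices at which $\abar_i$ lies to the right of $b$ in the Morley order. By Dedekind completeness of $I$, this cut is realized at some $i^* \in I$ --- either the maximum of $I \setminus I^R$ or the minimum of $I^R$ --- which is the desired insertion index.

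Finally, with $i^*$ identified, I would choose the complete type $r(\xbar,y) \in S(M)$ that is finitely satisfied in $M$, extends $q(\xbar) \cup \tp(b/M)$, and places $b$ in the Morley position at $i^*$ (so that $(\abar_{i^*}, b) \models r$). Applying Lemma~\ref{r} to $r$ and $q$, using that $(\abar_i : i \in I)$ realizes $q^{(I)}$, I obtain $(b_i : i \in I)$ such that $(\abar_i b_i : i \in I)$ realizes $r^{(I)}$ and is therefore indiscernible. An automorphism fixing $M\abar_{\le i^*}$ pointwise (available because $b$ and $b_{i^*}$ both realize the unique f.s.\ extension of $\tp(b/M)$ to $M\abar_{\le i^*}$) then lets me adjust the realization so that $b_{i^*} = b$ exactly. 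The main obstacle I anticipate is the middle step: verifying that the dichotomy's case-by-case choices assemble into a coherent end-segment of $I$, and correctly handling the boundary cases where $I^R$ is empty or all of $I$ --- in those cases one places $b$ at an extremal index of $I$ and uses the genericity of the remaining $b_i$'s to keep the combined sequence indiscernible.
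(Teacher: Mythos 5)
Your overall shape matches the paper: set up a Morley-sequence framework, locate an insertion index $i^*$, apply Lemma~\ref{r}, and finish with an automorphism. However, the critical middle step --- locating $i^*$ --- is where your approach diverges from the paper and where you have a genuine gap, one you yourself flag as ``the main obstacle.''

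Your plan applies the f.s.\ dichotomy \emph{pointwise}, asking for each single index $i$ whether $\tp(\abar_i/Mb)$ or $\tp(\abar_ib/M)$ is finitely satisfied in $M$. This conditions $\abar_i$ against $b$ in isolation, ignoring the rest of the sequence. There is no reason the resulting set $I^R$ should be an end-segment: indiscernibility of $(\abar_i)$ over $M$ does not persist over $Mb$, so $i\in I^R$ and $j>i$ gives you no control on whether $j\in I^R$. Even if it were a cut, the pointwise condition $\tp(\abar_{i^*}b/M)$ f.s.\ in $M$ would be far weaker than what Lemma~\ref{r} requires to splice $b$ into the full Morley sequence: you need the \emph{inserted} sequence, including all $\abar_j$ for $j>i^*$ now taken over $Mb$, to remain an $M$-f.s.\ sequence. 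The paper instead defines $I_0$ as the maximal \emph{initial segment} with $\tp(b/D\abar_{I_0})$ finitely satisfied in $M$, and the hard fact that $b$ can be spliced in at the boundary of $I_0$ is exactly the content of \cite[Lemma~3.2]{MonNIP}, which it cites rather than re-derives. That lemma applies the f.s.\ dichotomy incrementally along the sequence, not one tuple at a time, and this incremental transfinite argument is the missing ingredient in your proposal.

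A smaller but real slip: you speak of a ``full model $M$,'' but in the paper's Definition of fullness, the object that must be full is a set $C\supseteq M$ (here a set $D\supseteq M$), not $M$ itself; requiring $M$ to realize all types over itself would amount to saturation and is both harder to arrange and unnecessary. Relatedly, the final automorphism must fix $A_I D$ pointwise, not just $M\abar_{\le i^*}$: to move $b_{i^*}$ to $b$ you need $\tp(b/A_I D)=\tp(b_{i^*}/A_I D)$, which uses stationarity of f.s.\ extensions over the full $D$ applied \emph{on both sides} of $i^*$, again relying on the inserted sequence being $M$-f.s.\ over $D$ --- the same fact your cut-finding step fails to supply.
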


\begin{proof}  Choose $M$ and a full $D\supset M$ such that $(\abar_i:i\in I)$ is an $M$-f.s.\ sequence over $D$ and $(\abar_i:i\in I)$
is indiscernible over $D$, by \cite[Lemma 2.20]{MonNIP}.  By Lemma~\ref{useful} and possibly replacing 
replacing $D$ by a conjugate,
we may additionally assume that $\tp(b/D)$ is finitely satisfied in $M$.

	\begin{claim*}
		There is some $i^*\in I$ such that $(\abar_i:i<i^*) ^\smallfrown (\abar_{i^*}b) ^\smallfrown (\abar_i:i>i^*)$ is an $M$-f.s.\ sequence over $D$.
	\end{claim*}

		\emph{Proof of Claim.} This is immediate from the proof of Lemma \cite[Lemma 3.2]{MonNIP}, where the following is shown. Let $I_0$ be the maximal initial segment of $I$ such that $\tp(b/D\abar_{I_0})$ is finitely satisfiable in $M$, and let $I = I_0  ^\smallfrown I_1$. If $I_1$ has a minimal element $i_1^*$, it is shown that we can take $i^* = i_1^*$. Otherwise, it is shown that $(\abar_i:i \in I_0) ^\smallfrown (b) ^\smallfrown (\abar_i:i\in I_1)$ is an $M$-f.s.\ sequence over $D$. In this case, $I_0$ must have a maximal element $i_0^*$ by Dedekind completeness, and $(\abar_i:i \in I_0\bs i_0^*) ^\smallfrown (\abar_{i_0^*}b) ^\smallfrown (\abar_i:i\in I_1)$ is also an $M$-f.s.\ sequence over $D$. \hfill $\lozenge$

	Let $r(\xbar,y):=\tp(\abar_{i^*}b/D)$
	and let $q(\xbar):=\tp(\abar_i/D)$ for some (equivalently, for every) $i\in I$.  Then, by Lemma~\ref{r} there are $\{b_i:i\in I\}$ such that $(\abar_i b_i:i\in I)$ realizes $r^{(I)}$.
	By \cite[Lemma 2.22]{MonNIP}, $(\abar_i b_i:i\in I)$ is indiscernible over $D$.  Finally, choose an automorphism of $\C$ fixing
	$A_ID$ with $\sigma(b_{i^*})=b$.  Then $(\abar_i\sigma(b_i):i\in D)$ is as desired.
\end{proof}

Before the main theorem, we need state the following obstruction to monadic NIP.

\begin{definition} \label{def:coding}
		A {\em tuple-coding configuration} is a formula $\phi(\xbar, \ybar, z)$, a pair of mutually indiscernible sequences $(\abar_i : i \in \Q)$ and $(\bbar_j: j \in \Q)$, and a set of singletons $\set{c_{i,j}| i \in \Q, j \in \Q}$, such that $\models \phi(\abar_i, \bbar_j, c_{k,\ell})$ if and only if $(i,j) = (k, \ell)$.
\end{definition}

\begin{theorem} \label{thm:mNIPchar}
	Let $T$ be a complete theory. Then the following are equivalent.
	\begin{enumerate}[(1)]
		\item $T$ is monadically NIP.
		\item $T$ admits widening of indiscernibles.
		\item $T$ is dp$^+$-minimal.
		\item $T$ is dp-minimal and has endless indiscernible triviality.
	\end{enumerate}
\end{theorem}
\begin{proof}
	$(1) \Rightarrow (2)$ Let $\II = (\abar_i  : i \in I)$ be an infinite indiscernible sequence. Let $(I_2,\le)$ be the Dedekind completion of $(I,\le)$.  By Compactness, choose
	$(\abar_i:i\in I_2)$ extending $(\abar_i:i\in I)$ that is also indiscernible.  By Lemma~\ref{1.1} there is $i^*\in I_2$ and $(b_j:j\in I_2)$ such that $b_{i^*}=b$
	and $(\abar_i b_i:i\in I_2)$ is indiscernible.  Finally, let $(J,\le)\subseteq (I_2,\le)$ be the subordering with universe $I\cup\{i^*\}$.  Then $(\abar_j b_j:j\in J)$ is as desired.
	
	$(2) \Rightarrow (3)$ Suppose $T$  admits widening of indiscernibles,  $\II = (\abar_i : i \in \Q)$ is an infinite indiscernible sequence, and  $b$ is a singleton. Let $\II^+ = (\abar_jb_j : j \in J)$ with $J \subset \R \cup \set{\pm \infty}$ be the sequence produced by widening $\II$, with $b_{j^*} = b$. Then $\II^+_{<j^*}$ and $\II^+_{>j^*}$ are mutually indiscernible over $\abar_{j^*}b$.
	
	$(3) \Rightarrow (1)$ Suppose $T$ is not monadically NIP, so by \cite{MonNIP} $T$ admits a tuple-coding configuration $\II = (\abar_i : i \in \Q)$, $\JJ = (\bbar_j : j \in \Q)$, $\set{c_{i,j} | i,j  \in \Q}$, $\phi(\xbar, \ybar, z)$. Since $\II$ and $\JJ$ are mutually indiscernible, $\II^* = (\abar_i\bbar_i : i \in \Q)$ is also indiscernible. Then any $c_{i,j}$ with $i \neq j$ witnesses the failure of dp$^+$-minimality.
	
	$(1) \Leftrightarrow (4)$ This is contained in the corrigendum to \cite{MonNIP}.
	\end{proof}

 We note one application in a special case.  Call a  theory $T$ {\em binary} if the type of any $n$-tuple of elements from any $M\models T$ is determined by the union of the types of its 2-element subsequences.  Clearly, if $T$ is binary, then it has indiscernible triviality.

 \begin{corollary} \label{cor:binary} If $T$ is binary, 
 then $T$ is monadically NIP if and only if every completion of $T$ is dp-minimal.  In this case, $T$ has indiscernible triviality (as opposed to just endless indiscernible triviality).
 \end{corollary}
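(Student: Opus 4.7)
The plan is to apply Theorem~\ref{thm:mNIPchar} one completion at a time, using the observation stated immediately before the corollary that binarity automatically yields indiscernible triviality and hence, \emph{a fortiori}, endless indiscernible triviality. Since Theorem~\ref{thm:mNIPchar} is formulated for complete theories and monadic NIP of $T$ is equivalent to monadic NIP of each of its completions, one must pass to completions, and observe that the defining condition for binarity quantifies over models, so that binarity of $T$ descends to each completion. This is the only piece of bookkeeping required.

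For the forward direction, assume $T$ is monadically NIP. Every completion $T'$ of $T$ is then monadically NIP, and by Theorem~\ref{thm:mNIPchar} $(1)\Rightarrow(3)$ each such $T'$ is dp$^+$-minimal, hence in particular dp-minimal. For the reverse direction, assume every completion of $T$ is dp-minimal. Binarity of each completion supplies indiscernible triviality, and in particular endless indiscernible triviality, so Theorem~\ref{thm:mNIPchar} $(4)\Rightarrow(1)$ applies to give that each completion is monadically NIP; therefore $T$ is monadically NIP.

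The second sentence of the corollary requires no additional work: the paragraph preceding the statement already records that binarity alone forces full indiscernible triviality. This is worth flagging because Theorem~\ref{thm:mNIPchar} on its own only guarantees the endless version for monadically NIP theories, whereas the binary hypothesis upgrades the conclusion to arbitrary (possibly endpointed) infinite indiscernible sequences. The main ``obstacle'' is thus really just semantic rather than mathematical: confirming that binarity, dp-minimality, and monadic NIP each behave correctly under taking completions so that the proof genuinely reduces to invoking Theorem~\ref{thm:mNIPchar}(4).
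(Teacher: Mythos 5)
The paper gives no explicit proof of this corollary (it is presented as immediate from Theorem~\ref{thm:mNIPchar} and the preceding remark), and your argument is precisely the intended one: pass to completions, note that binarity descends, invoke the equivalence $(1)\Leftrightarrow(4)$ of Theorem~\ref{thm:mNIPchar} with binarity supplying the (endless) indiscernible triviality needed for the reverse direction, and observe that binarity already gives the stronger full indiscernible triviality. Your bookkeeping about completions is correct and your flag that monadic NIP alone only delivers the endless version, with binarity doing the upgrade, is exactly the right thing to point out.
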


\subsection{Uniqueness of the cut point}
In a dp-minimal theory, a singleton can ``exceptionally relate'' to several points of an indiscernible sequence, although removing any one of these points destroys the relation. For example, consider an infinite linearly independent set in a vector space and the sum of some finite subset. However, in a dp$^+$-minimal theory, we will show that singleton can ``exceptionally relate'' to only one cut in an endless indiscernible sequence. This implies if an indiscernible $\II$ is not indiscernible over $b$, then the choice of $i^*$ in Definition \ref{def:dpmin} is essentially unique, and is truly unique if the index sequence is also dense and Dedekind complete.

One outcome of this is that an endless, dense, Dedekind complete indiscernible sequence in a monadically NIP theory gives rise to a well-defined notion of independence (still depending on the sequence). Namely, given a model $M \subset \C$ and such an indiscernible sequence $\II$, then we may ``widen'' $\II$ to $\II^+$ covering $M$. There will be some points that $\II$ remains indiscernible over, which would be inserted into every tuple of $\II^+$, which we call orthogonal points. Then two non-orthogonal points may be considered dependent over $\II$ if they are placed into the same tuple of $\II^+$ and independent otherwise, and orthogonal points are considered independent from everything. From the results of this section, it follows that at every step there is a unique tuple where each non-orthogonal point can be inserted, and furthermore this tuple does not depend on the order in which points are inserted.

The main result concerning indiscernible sequences is Proposition \ref{indiscversion}. The proof of this makes use of the parallel result Proposition \ref{fsversion} concerning $M$-f.s. sequences (Definition \ref{AJ}). We begin with some general lemmas on indiscernibles without any assumption of monadic NIP.

\begin{lemma}  \label{triple}  Suppose $(I,\le)$ is endless and $I=I_0 ^\smallfrown I_1 ^\smallfrown I_2$ is any partition of $I$ into three non-empty convex sets.
	If  both $(\abar_i:i\in I_0)$ and $(\abar_i:i\in I_1\cup I_2)$ are mutually indiscernible,  and $(\abar_i:i\in I_0\cup I_1)$ and $(\abar_i:i\in I_2)$ are mutually indiscernible, then
	$(\abar_i:i\in I)$ is indiscernible.
\end{lemma}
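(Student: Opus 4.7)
I plan to show that for any two increasing $n$-tuples $\bar{s}, \bar{t}$ from $I$, $\tp(\abar_{\bar{s}}) = \tp(\abar_{\bar{t}})$, by joining them via a chain of intermediate tuples in $I$, each step preserving the type using one of the two mutual indiscernibility hypotheses. For a tuple $\bar{s}$, write $P(\bar{s}) = (a_s, b_s, c_s)$ for its \emph{profile} with $a_s = |\bar{s} \cap I_0|$, $b_s = |\bar{s} \cap I_1|$, $c_s = |\bar{s} \cap I_2|$.

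The first step is to extract two basic preservation lemmas. From the first hypothesis, if $a_s = a_t$ (equivalently $|\bar{s}_{12}| = |\bar{t}_{12}|$), then $\tp(\abar_{\bar{s}}) = \tp(\abar_{\bar{t}})$: writing $\bar{s} = \bar{s}_0 \bar{s}_{12}$ according to the convex partition, first use the indiscernibility of $(\abar_i : i \in I_1 \cup I_2)$ over $\abar_{I_0}$ to find an automorphism fixing $\abar_{I_0}$ pointwise and sending $\abar_{\bar{s}_{12}}$ to $\abar_{\bar{t}_{12}}$, then use the indiscernibility of $(\abar_i : i \in I_0)$ over $\abar_{I_1 \cup I_2}$ (in particular over $\abar_{\bar{t}_{12}}$) to further send $\abar_{\bar{s}_0}$ to $\abar_{\bar{t}_0}$. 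Symmetrically, $c_s = c_t$ forces $\tp(\abar_{\bar{s}}) = \tp(\abar_{\bar{t}})$ via the second hypothesis.

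Given $\bar{s}, \bar{t}$ of length $n$, I attempt the single intermediate $\bar{u}$ in $I$ with profile $(a_s, n - a_s - c_t, c_t)$: this is realizable in $I$ provided $a_s + c_t \le n$ and $n - a_s - c_t \le |I_1|$, and then chaining the two preservation lemmas gives $\tp(\abar_{\bar{s}}) = \tp(\abar_{\bar{u}}) = \tp(\abar_{\bar{t}})$. A quick arithmetic check, using $(a_s + c_t) + (a_t + c_s) = 2n - b_s - b_t \le 2n$, shows that at least one of $a_s + c_t \le n$ and $a_t + c_s \le n$ must hold; in the latter case one uses the symmetric intermediate $(a_t, n - a_t - c_s, c_s)$ instead.

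The main obstacle is the case where the single-intermediate approach fails because $|I_1|$ is too small to realize the needed $b$-coordinate (possible only when the $c$-coordinate needs to shift by more than $|I_1|$ can absorb). I handle this by chaining several intermediates inside the convex polygon
\[ \Pi = \{(a,c) \in \R^2 : 0 \le a \le |I_0|,\ 0 \le c \le |I_2|,\ n - |I_1| \le a + c \le n\}, \]
whose lattice points are exactly the profiles realizable in $I$ of length $n$ and whose axis-parallel moves correspond to applications of the two preservation lemmas; since each $|I_j| \ge 1$, $\Pi$ is a convex strip of positive width, and a short combinatorial argument shows that its lattice points are grid-connected. An alternative avoiding the combinatorial step is to first pass via compactness to an elementary extension $(\abar_i : i \in I^*)$ in which each $I_j^* \supseteq I_j$ is infinite and both mutual indiscernibility hypotheses persist (the hypotheses determine a consistent EM-template over convex $3$-partitioned index structures, realizable over any such structure), whereupon a single intermediate tuple always suffices and the conclusion transfers back to $I$.
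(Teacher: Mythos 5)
Your primary argument is correct, and it is essentially the paper's proof in different clothing. The two ``preservation lemmas'' are direct restatements of the two mutual-indiscernibility hypotheses (the type of an increasing $n$-tuple depends only on how many of its entries lie in $I_0$, respectively in $I_2$), and your walk through the lattice points of $\Pi$ is the same process the paper phrases as repeatedly shifting entries from $I_2$ into $I_1$ and from $I_1$ into $I_0$ until the whole tuple sits in $I_0$. Your arithmetic remark that one of $a_s + c_t$ and $a_t + c_s$ is at most $n$, so that a single intermediate usually suffices, is a nice tidying but not essential.

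The compactness ``alternative'' does not work as stated, and should not be offered as a way to avoid the combinatorial step. The parenthetical claim that ``the hypotheses determine a consistent EM-template over convex 3-partitioned index structures, realizable over any such structure'' is precisely what is in question. The hypotheses assign a type $f(a,b,c)$ only to profiles actually realized in $I$, i.e.\ those with $b \le |I_1|$, and assert the coincidences $f(a,b,c)=f(a,b',c')$ only when $a=a'$ or $c=c'$ \emph{and both sides are realized}. When $I_1$ is finite, extending this partial assignment to all profiles (so that $I_1^*$ can be made infinite) requires showing that the two projections ``depends only on $a$'' and ``depends only on $c$'' admit a common well-defined completion; that well-definedness is exactly the connectivity of the strip $\Pi$. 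Equivalently, to adjoin a new index $y$ to $I_1^*$ one must realize a type for $\abar_y$ over $\abar_I$ forcing both mutual-indiscernibility hypotheses to persist, and the consistency of that type already encodes identities such as $f(a,|I_1|,c)=f(a,|I_1|+1,c-1)$, which do not follow from the hypotheses without the combinatorial walk. So the ``alternative'' quietly assumes the step it is meant to bypass; drop it, or supply the connectivity argument in its justification --- at which point it is no longer an alternative.
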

\begin{proof}
    Given an increasing tuple $\abar_{i_1} \dots \abar_{i_n}$, we may shift an individual $\abar_{i_j}$ from $\II_1$ to $\II_0$ while preserving the type by using the second mutual indiscernibility assumption, and similarly may shift an individual $\abar_{i_j}$ from $\II_2$ to $\II_1$ while preserving the type by using the first mutual indiscernibility assumption. Repeating this process, we eventually shift all individual subtuples into $\II_0$, which is assumed to be indiscernible.
\end{proof}

\begin{lemma} \label{cutsingleton}
	Let $\II = (\abar_i : i \in I)$ be a sequence, and suppose there is a cut $I = I_0  ^\smallfrown I_1$ such that $I_0$ and $I_1$ are mutually indiscernible. If there is $i^* \in I$ such that $\II \bs \set{\abar_{i^*}}$ is indiscernible, then $\II$ is indiscernible.
\end{lemma}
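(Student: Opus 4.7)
The plan is to show that for each $n$, every increasing $n$-tuple from $I$ has the same type. Without loss of generality $i^* \in I_0$ (by symmetry), and by the paper's standing convention both of the mutually indiscernible sequences $(\abar_i : i \in I_0)$ and $(\abar_i : i \in I_1)$ are themselves infinite indiscernible sequences.

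The first step is to use mutual indiscernibility to parametrize types coarsely by distribution. Given any increasing $n$-tuple $(\abar_{j_1}, \dots, \abar_{j_n})$ from $\II$, split it as $\ebar_0 \ebar_1$, where $\ebar_0$ has length $p$ inside $(\abar_i : i \in I_0)$ and $\ebar_1$ has length $q$ inside $(\abar_i : i \in I_1)$, with $p + q = n$ (this split is well-defined because $I_0 < I_1$ in the cut). A standard two-step application of mutual indiscernibility---first vary $\ebar_0$ within $I_0$ keeping $\ebar_1$ fixed, then vary $\ebar_1$ within $I_1$ keeping $\ebar_0$ fixed---shows that $\tp(\ebar_0 \ebar_1)$ depends only on $(p,q)$; call this common type $Q_{p,q}$. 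The second step is to collapse the $(p,q)$-dependence using the indiscernibility of $\II \setminus \{\abar_{i^*}\}$: for every $(p,q)$ with $p+q=n$, pick $p$ elements of $I_0 \setminus \{i^*\}$ and $q$ elements of $I_1$ (both sets being infinite) to obtain an increasing $n$-tuple inside $\II \setminus \{\abar_{i^*}\}$ of distribution $(p,q)$. Since all increasing $n$-tuples in $\II \setminus \{\abar_{i^*}\}$ share a single common type $q_n$, this forces $Q_{p,q} = q_n$ for every admissible $(p,q)$, and hence every increasing $n$-tuple in $\II$---whether or not it contains $\abar_{i^*}$---has type $q_n$, which is exactly indiscernibility of $\II$.

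The only genuinely delicate point is the availability, inside $\II \setminus \{\abar_{i^*}\}$, of an $n$-tuple realizing each $(p,q)$ distribution with $p+q=n$; this is precisely where the infiniteness of both $I_0 \setminus \{i^*\}$ and $I_1$ is used. This is not merely cosmetic: for example, if $I_0 = \{i^*\}$ and $\abar_{i^*}$ has a different $1$-type from the remaining $\abar_i$, the hypotheses still hold vacuously but the conclusion fails. Under the standing convention that mutually indiscernible sequences are each infinite, the required tuples always exist and the argument goes through.
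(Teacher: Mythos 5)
Your proof is correct and takes essentially the same approach as the paper's: the paper phrases the argument as ``shifting'' $\abar_{i^*}$ (and possibly other coordinates on its side of the cut) to land inside $\II \setminus \{\abar_{i^*}\}$, which is precisely your observation that mutual indiscernibility makes the type of an increasing $n$-tuple depend only on its distribution $(p,q)$ across the cut, and that every such distribution is realized inside $\II \setminus \{\abar_{i^*}\}$. Your remark that both halves of the cut must be infinite for this to go through is a fair point; the paper's standing convention that indiscernible sequences are infinite (applied to each half of the mutually indiscernible pair) is what licenses it there as well.
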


\begin{proof}
    Given an increasing tuple $\abar_{i_1} \dots \abar_{i_n}$, if some $\abar_{i_j} = \abar_{i^*}$, we may use the mutual indiscernibility to shift $\abar_{i^*}$ to another tuple on the same side of the cut, possibly also shifting other $\abar_{i_k}$ if this is needed to make space.
\end{proof}

\begin{lemma} \label{bigind}
 Let $\II = (\abar_i : i \in I)$ be an endless indiscernible sequence, and let $b$ be a singleton. If any of the following hold, then  $\II$ is indiscernible over $b$. 
 
	\begin{enumerate}[(1)]
		\item There are two distinct cuts $I = I_0  ^\smallfrown I_1$ and $I = J_0  ^\smallfrown J_1$ such that $(\abar_i : i \in I_0)$ and $(\abar_i : i \in I_1)$ are mutually indiscernible over $b$, as are $(\abar_i : i \in J_0)$ and $(\abar_i : i \in J_1)$.
		\item There is a cut $I = I_0  ^\smallfrown I_1$ such that $(\abar_i : i \in I_0)$ and $(\abar_i : i \in I_1)$ are mutually indiscernible over $b$, and an element $i^* \in I$ that is neither the maximal element of $I_0$ nor the minimal element of $I_1$ with $(\abar_i : i<i^*)$ and $(\abar_i : i>i^*)$ are mutually indiscernible over $b$.
		\item There are two distinct elements $i^*, i' \in I$ with $(\abar_i : i<i^*)$ and $(\abar_i : i>i^*)$ mutually indiscernible over $a_{i^*}b$ and $(\abar_i : i<i')$ and $(\abar_i : i>i')$ mutually indiscernible over $b$, and there is no cut $I = I_0  ^\smallfrown I_1$ such that  $(\abar_i : i \in I_0)$ and $(\abar_i : i \in I_1)$ are mutually indiscernible over $b$ and $i^*, i'$ are the maximal element of $I_0$ and the minimal element of $I_1$.
	\end{enumerate}
\end{lemma}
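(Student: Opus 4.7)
For part (1), the plan is to invoke Lemma \ref{triple} in the language expanded by a constant for $b$. Assuming the cuts are proper (else one side of some cut is empty and yields the conclusion immediately) and WLOG $I_0 \subsetneq J_0$, set $K = J_0 \setminus I_0$; then $I = I_0 \sqcup K \sqcup J_1$ is a partition into three non-empty convex pieces, and the two hypothesized mutual indiscernibilities over $b$ are precisely the hypotheses of Lemma \ref{triple} (the mutual indiscernibility of $(I_0, I_1 \cup I_2)$ matches the first cut, and $(J_0, J_1)$ matches the second). The conclusion is that $\II$ is indiscernible over $b$.

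For part (2), work in $\II' := \II \setminus \set{\abar_{i^*}}$, which remains indiscernible. WLOG $i^* \in I_0$ and $i^*$ is not the maximum of $I_0$. Then two cuts of $\II'$ are mutually indiscernible over $b$: the restricted original cut $(I_0 \setminus \set{i^*}, I_1)$, and the cut $(\II_{<i^*}, \II_{>i^*})$ at $i^*$. These cuts of $\II'$ are distinct precisely because $i^*$ is not maximal in $I_0$. Applying part (1) to $\II'$ yields that $\II'$ is indiscernible over $b$, and Lemma \ref{cutsingleton} applied over $b$ with the cut $(I_0, I_1)$ of $\II$ and the removed singleton $\abar_{i^*}$ then gives $\II$ indiscernible over $b$.

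For part (3), assume WLOG $i^* < i'$. I first show that the adjacent case (where $i'$ is the immediate successor of $i^*$) is inconsistent with the hypotheses. In that case $\II_{\leq i^*} = \II_{<i'}$ and $\II_{\geq i'} = \II_{>i^*}$, and I claim the cut $(\II_{\leq i^*}, \II_{\geq i'})$ of $\II$ is mutually indiscernible over $b$, contradicting the ``no cut'' clause. This is verified by a case analysis on whether tuples $\bar c$ of length $k$ from $\II_{\leq i^*}$ and $\bar d$ of length $m$ from $\II_{\geq i'}$ contain $\abar_{i^*}$ and $\abar_{i'}$, respectively: when $\abar_{i'} \notin \bar d$, mutual indiscernibility of $(\II_{<i'}, \II_{>i'})$ over $b$ pins down $\tp(\bar c \bar d / b)$ in terms of $(k, m)$; when $\abar_{i^*} \in \bar c$, one extracts $\abar_{i^*}$ and applies mutual indiscernibility of $(\II_{<i^*}, \II_{>i^*})$ over $\abar_{i^*}b$, using that indiscernibility of $\II_{>i^*}$ over $\abar_{i^*}b$ makes the possible presence of $\abar_{i'}$ in a right-hand tuple immaterial. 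All four sub-cases collapse to the same type determined by $(k, m)$, giving the desired mutual indiscernibility.

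In the non-adjacent case, fix $k \in I$ with $i^* < k < i'$ and apply part (2) twice: to $\II \setminus \set{\abar_{i^*}}$ with the cut at $i^*$ and interior element $i'$ (not the minimum of $\II_{>i^*}$, thanks to $k$), and symmetrically to $\II \setminus \set{\abar_{i'}}$ with the cut at $i'$ and interior $i^*$. Both resulting subsequences are then indiscernible over $b$, so every $n$-tuple of $\II$ missing $\abar_{i^*}$ or missing $\abar_{i'}$ has a common type $q$ over $b$. The remaining tuples have the form $\bar c = \bar c^L \abar_{i^*} \bar c^M \abar_{i'} \bar c^R$ with $\bar c^L \in \II_{<i^*}$, $\bar c^M \in \II_{(i^*, i')}$, $\bar c^R \in \II_{>i'}$; extracting $\abar_{i^*}$ we get $\tp(\bar c / b) = \tp(\bar c^L, \bar c^M \abar_{i'} \bar c^R / \abar_{i^*}b)$, and mutual indiscernibility of $(\II_{<i^*}, \II_{>i^*})$ over $\abar_{i^*} b$ permits replacing $\bar c^M \abar_{i'} \bar c^R$ by any increasing tuple $\bar e$ of the same length from the infinite set $\II_{>i^*} \setminus \set{\abar_{i'}}$, producing a tuple in $\II \setminus \set{\abar_{i'}}$ of type $q$. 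Hence $\tp(\bar c/b) = q$ and $\II$ is indiscernible over $b$. The main obstacle in part (3) is this mixed case, resolved by noting that once $\abar_{i'}$ is absorbed into the $\II_{>i^*}$-side of the cut at $i^*$, it becomes a generic element of $\II_{>i^*}$, indistinguishable over $\abar_{i^*}b$ from any other.
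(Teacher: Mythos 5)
Your proposal follows essentially the same strategy as the paper's proof: part~(1) reduces to Lemma~\ref{triple} via the nested-cut decomposition; part~(2) passes to $\II\setminus\{\abar_{i^*}\}$, applies part~(1) to the two induced cuts, and then re-inserts $\abar_{i^*}$ via Lemma~\ref{cutsingleton}; part~(3) splits into an ``adjacent'' case (ruled out by producing a forbidden cut) and a ``non-adjacent'' case (where part~(2) applies). Where you differ is in the level of explicitness in~(3): the paper's published proof is quite terse, merely asserting that ``routine manipulation'' settles the adjacent case and leaving implicit the argument that the non-adjacent case closes. Your non-adjacent argument applies part~(2) \emph{twice} — to $\II\setminus\{\abar_{i^*}\}$ and to $\II\setminus\{\abar_{i'}\}$ — obtains a common type $q$, and then absorbs mixed tuples $\bar c^L\abar_{i^*}\bar c^M\abar_{i'}\bar c^R$ by swapping $\bar c^M\abar_{i'}\bar c^R$ for a tuple avoiding $\abar_{i'}$; this is a clean and symmetric way to finish and is arguably an improvement on the paper's implicit step.

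There is one small gap in your adjacent-case analysis: the two conditions you check — ``$\abar_{i'}\notin\bar d$'' and ``$\abar_{i^*}\in\bar c$'' — cover only three of the four sub-cases. The case $\abar_{i^*}\notin\bar c$ and $\abar_{i'}\in\bar d$ (so $\bar c\subseteq\II_{<i^*}$ while $\bar d$ contains $\abar_{i'}$) is not addressed by either. It is easily dispatched: $(\II_{<i^*},\II_{>i^*})$ are mutually indiscernible over $b$ (this follows from mutual indiscernibility over $\abar_{i^*}b$), so $\bar d$ may be replaced by an increasing tuple of the same length in $\II_{>i^*}=\II_{\geq i'}$ avoiding $\abar_{i'}$, reducing to your first condition. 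You should add this sub-case, or reorganize the argument so that all four sub-cases are reduced to the sub-case $\abar_{i^*}\notin\bar c,\ \abar_{i'}\notin\bar d$.
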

\begin{proof}
	(1) Suppose $J_0 \subset I_0$. Apply Lemma \ref{triple} to $(J_0  ^\smallfrown I_0) \bs (J_0  ^\smallfrown I_1)$.
	
	(2) Suppose there is such a cut $I = I_0  ^\smallfrown I_1$ and an $i^* \in I$ such that $(\abar_i:i<i^*)$ and $(\abar_i:i>i^*)$ are mutually indiscernible over $b$. Possibly reversing the order, we may suppose $i^* \in I_0$. Let $I\bs \set{i^*} = J_0  ^\smallfrown J_1$ with $J_0 = \set{i \in I | i < i^*}$ and $J_1 = \set{i \in I_0 | i > i^*}$. If $i^*$ is not the maximal element of $I_0$, then this cut is distinct from $(I_0 \bs \set{i^*}) ^\smallfrown I_1$. So by Case (1), $\II \bs \set{\abar_{i^*}}$ is indiscernible over $b$. So by Lemma \ref{cutsingleton}, $\II$ is indiscernible over $b$.
	
	(3) Suppose there are such $i^*$ and $i'$ with $i' < i^*$ (the case $i' > i^*$ is symmetric). Applying Case (2) to $\II \bs \set{\abar_{i^*}}$, we have that $i'$ is the immediate predecessor of $i^*$. Now consider the cut $I = I_0  ^\smallfrown I_1$ with $I_0 = \set{i \in I | i \leq i'}$ and $I_1 =\set{ i \in I | i \geq i^*}$. 
	Routine manipulation of indiscernibles shows $\II\upharpoonright{I_0}$ and $\II\upharpoonright{I_1}$ are mutually indiscernible over $b$.
\end{proof}

\begin{lemma}  \label{f.s.indisc} Suppose $(I,\le)$ is endless, $I_0$ is a proper initial segment, and $I_1$ is a proper final segment (but we are not assuming $I=I_0\cup I_1$).
	If $(\abar_i:i\in I)$ is indiscernible, and $M$ and a full $D \supset M$ are chosen so that $(\abar_i:i\in I_0) ^\smallfrown (Y)  ^\smallfrown (\abar_i:i\in I_1)$ is an $M$-f.s.\ sequence over $D$
	and $(\abar_i:i\in I_0)$, $(\abar_i:i\in I_1)$ are each indiscernible over $D$, then $(\abar_i:i\in I_0)$ and $(\abar_i:i\in I_1)$ are mutually indiscernible over $DY$.
\end{lemma}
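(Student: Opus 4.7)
My plan is to show that the conclusion --- mutual indiscernibility of $\II_0 := (\abar_i : i \in I_0)$ and $\II_1 := (\abar_i : i \in I_1)$ over $DY$ --- can be extracted directly from the hypothesis that the combined sequence $(\abar_i:i\in I_0) ^\smallfrown (Y) ^\smallfrown (\abar_i:i\in I_1)$ is indiscernible over $D$, by ``pinning'' $Y$ at its distinguished position and absorbing it into the parameter set. Unpacking the conclusion, mutual indiscernibility over $DY$ is the assertion that for any increasing tuples $\ibar, \ibar' \subseteq I_0$ with $|\ibar| = |\ibar'|$ and $\jbar, \jbar' \subseteq I_1$ with $|\jbar| = |\jbar'|$, one has $\tp(\abar_{\ibar}\abar_{\jbar}/DY) = \tp(\abar_{\ibar'}\abar_{\jbar'}/DY)$.

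The main step is to enumerate $Y$ as a tuple $\bar y$ and observe that, in the combined linear order on $I_0 \cup \{*\} \cup I_1$, the tuples $\abar_{\ibar}\bar y\abar_{\jbar}$ and $\abar_{\ibar'}\bar y\abar_{\jbar'}$ are both increasing subtuples of the combined sequence of the same shape --- namely, $|\ibar|$ positions from $I_0$, then $\bar y$ at position $*$, then $|\jbar|$ positions from $I_1$. By indiscernibility over $D$ of the combined sequence,
\[
\tp(\abar_{\ibar}\,\bar y\,\abar_{\jbar}/D) \;=\; \tp(\abar_{\ibar'}\,\bar y\,\abar_{\jbar'}/D).
\]
Because $\bar y$ occupies the same variable positions in both tuples, this equality of types witnesses an elementary map fixing each entry of $\bar y$ (and each of $D$) pointwise and sending $\abar_{\ibar}\abar_{\jbar}$ to $\abar_{\ibar'}\abar_{\jbar'}$; equivalently, absorbing $\bar y$ into the parameter set gives $\tp(\abar_{\ibar}\abar_{\jbar}/D\bar y) = \tp(\abar_{\ibar'}\abar_{\jbar'}/D\bar y)$, which is mutual indiscernibility over $DY$.

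I expect the only delicate point to be the precise meaning of indiscernibility for the combined sequence when $Y$ has a different arity than the $\abar_i$: I take it to mean that any two increasing subtuples of the combined sequence with the same shape (same number of $I_0$-entries, same presence of $\bar y$, same number of $I_1$-entries) have equal types over $D$. Beyond that shape bookkeeping there is no real obstacle. I note that the $M$-f.s.\ portion of the hypothesis does not appear to be strictly needed for the argument above; I read it as included because it is the standard route by which one obtains a combined sequence indiscernible over $D$ in applications --- via Morley sequences of f.s.\ types together with stationarity of f.s.\ extensions over the full set $D$ (\cite[Lemmas 2.12 and 2.22]{MonNIP}, as invoked in the proof of Lemma~\ref{1.1}) --- so that both hypotheses naturally arise together.
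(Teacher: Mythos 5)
Your core combinatorial move --- pin $Y$ at its distinguished position and absorb it into the parameter set --- is the right idea, and you correctly flagged the delicate point: what does indiscernibility mean for the combined sequence when $Y$ has a different arity from the $\abar_i$? But your resolution of that point is where the gap lies. In the intended application (see the proof of Proposition~\ref{indiscversion}), $Y$ is a singleton $b$ or a tuple $\abar_{i^*}b$, so the combined sequence genuinely has mismatched arities. Ordinary indiscernibility of such a sequence is not defined, and what actually gets established in the application is only that $(\abar_i : i\in I)$ itself is indiscernible over $D$ and that the combined sequence is an $M$-f.s.\ sequence over the full $D$. Your reading of the hypothesis (``same-shape increasing subtuples of the combined sequence have the same type over $D$'') is thus not what you are given; it is what you need to \emph{derive}.

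This is exactly what the $M$-f.s.\ hypothesis and fullness of $D$ are for. The paper's one-line proof invokes \cite[Proposition 2.17]{MonNIP}: an $M$-f.s.\ sequence over a full $D \supset M$ is automatically an \emph{order-congruence} over $D$. The order-congruence property is the correct arity-robust generalization of ``same-shape tuples have the same type'' for a sequence of sets of varying size, and once you have it your absorption argument goes through verbatim for the combined sequence $(\abar_i : i\in I_0) ^\smallfrown (Y) ^\smallfrown (\abar_i: i\in I_1)$. So, contrary to your closing aside, the $M$-f.s.\ portion of the hypothesis is not decoration that merely arranges for indiscernibility in applications --- it is the working part of the lemma. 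Indiscernibility of $(\abar_i : i\in I)$ over $D$ alone says nothing about how $Y$ sits relative to the $\abar_i$'s; the $M$-f.s.\ condition together with fullness is what forces the interaction between $Y$ and the two halves to be uniform.
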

\begin{proof}
	This is immediate from \cite[Proposition 2.17]{MonNIP} stating that an $M$-f.s. sequence over a full $D \supset M$ is an order-congruence over $D$ (\cite[Definition 2.16]{MonNIP}).
\end{proof}

We now begin working in monadically NIP theories.

\begin{proposition}[(Cut Lemma, f.s.\ version)]   \label{fsversion}   Let $T$ be monadically NIP. Suppose $(I,\le)$ is endless, $(\abar_i:i\in I)$ is indiscernible, and $b\in \C$ is any singleton for which
	$(\abar_i:i\in I)$ is not indiscernible over $b$.  Let $M\subseteq D$ be such that $M$ is a model, $D/M$ is full, $\tp(b/D)$ is finitely satisfiable in $M$, and
	$(\abar_i:i\in I)$ is an $M$-f.s.\ sequence over $D$.  Let $I_0$ be the maximal initial segment of $I$ such that $\tp(b/A_{I_0}D)$ is finitely satisfied in $M$, and let $I = I_0  ^\smallfrown I_1$. Let $i_0^*$ denote the maximal element of $I_0$ if it exists, and $i_1^*$ denote the minimal element of $I_1$ if it exists.
	\begin{enumerate}[(1)]  
		\item If $i_1^*$ does not exist (and possibly if it does), then 
		$$(\abar_i:i\in I_0)^\smallfrown (b)^\smallfrown (\abar_i:i\in I_1)$$ is an $M$-f.s.\ sequence over $D$
		\item If $i_1^*$ exists, then $$(\abar_i:i<i_1^*)^\smallfrown (\abar_{i_1^*} b)^\smallfrown (\abar_i:i>i_1^*)$$ is an $M$-f.s.\ sequence over $D$.
		\item If $i_0^*$ exists, then 
		$$(\abar_i:i<i_0^*)^\smallfrown (\abar_{i_0^*} b)^\smallfrown (\abar_i:i>i_0^*)$$ is an $M$-f.s.\ sequence over $D$ if and only if 
		$$(\abar_i:i\in I_0)^\smallfrown (b)^\smallfrown (\abar_i:i\in I_1)$$ is an $M$-f.s.\ sequence over $D$.
	\end{enumerate}
	
	Furthermore, these are the only three possibilities for inserting $b$ so that the result is an $M$-f.s. sequence over $D$.
\end{proposition}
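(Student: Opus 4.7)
The plan is to run the $M$-f.s.\ conditions through three operations on finite satisfiability over a full parameter set: the \emph{f.s.\ dichotomy} (available because $T$ is monadically NIP), \emph{transitivity} (if $\tp(\bbar/C)$ and $\tp(\abar/C\bbar)$ are both f.s.\ in $M$ with $M\subseteq C$, then $\tp(\abar\bbar/C)$ is f.s.\ in $M$), and \emph{restriction} (if $\tp(\abar\bbar/C)$ is f.s.\ in $M$, so is $\tp(\bbar/C)$). Maximality of $I_0$ supplies every contradiction: any initial segment strictly extending $I_0$ makes the corresponding type of $b$ non-f.s.\ in $M$.

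For part (2), assume $i_1^*$ exists, so $A_{<i_1^*}=A_{I_0}$. Applying f.s.\ dichotomy to $\tp(\abar_{i_1^*}/A_{I_0}D)$ f.s.\ and the singleton $b$ yields either $\tp(\abar_{i_1^*}/A_{I_0}Db)$ or $\tp(\abar_{i_1^*}b/A_{I_0}D)$ f.s.\ in $M$; in the first case, transitivity with $\tp(b/A_{I_0}D)$ f.s.\ (definition of $I_0$) promotes it to the second as well. Then for each $i>i_1^*$, the target $\tp(\abar_i/A_{<i}Db)$ f.s.\ follows once the dichotomy's alternative $\tp(\abar_ib/A_{<i}D)$ f.s.\ is ruled out, which it is because it would restrict to $\tp(b/A_{<i}D)$ f.s., contradicting maximality of $I_0$ since $A_{<i}\supsetneq A_{I_0}$. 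Part (1) is the same argument with $i_1^*$ absent: $\tp(b/A_{I_0}D)$ f.s.\ holds by definition, and for $i\in I_1$ the absence of a minimum in $I_1$ produces some $j\in I_1$ with $j<i$, so $A_{<i}\supsetneq A_{I_0}$ and the same dichotomy-plus-maximality argument applies.

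For part (3), when $i_0^*$ exists both configurations share the tail obligations $\tp(\abar_i/A_{<i}Db)$ f.s.\ for $i\in I_1=\{i>i_0^*\}$. The remaining head obligations hold automatically: for the insert configuration, $\tp(b/A_{I_0}D)$ f.s.\ is the definition of $I_0$; for the adjoin-to-$\abar_{i_0^*}$ configuration, $\tp(\abar_{i_0^*}b/A_{<i_0^*}D)$ f.s.\ follows by transitivity from $\tp(\abar_{i_0^*}/A_{<i_0^*}D)$ f.s.\ (original sequence) and $\tp(b/A_{\le i_0^*}D)=\tp(b/A_{I_0}D)$ f.s. Hence both configurations reduce to the same tail condition, and the biconditional follows.

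For the uniqueness clause, I would analyze an arbitrary placement $j^*$ for $b$: if $j^*=i\in I$, the requirement $\tp(\abar_ib/A_{<i}D)$ f.s.\ restricts to $\tp(b/A_{<i}D)$ f.s., and transitivity with $\tp(\abar_i/A_{<i}D)$ f.s.\ upgrades this to $\tp(b/A_{\le i}D)$ f.s., forcing $i\in I_0$; propagating the dichotomy forward through positions in $I_1$ then collapses $i$ to $i_0^*$ or the configuration to the adjoin-to-$\abar_{i_1^*}$ case. If $j^*\notin I$ with cut $(L,R)$, the head condition $\tp(b/A_LD)$ f.s.\ forces $L\subseteq I_0$, and a parallel forward propagation pins $L=I_0$. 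I expect this uniqueness step to be the main obstacle, since it requires careful tracking of which f.s.\ conditions can coexist at each position under the commutative tensor behavior of f.s.\ types in NIP; parts (1)--(3), by contrast, follow from mechanical applications of dichotomy, transitivity, and maximality of $I_0$.
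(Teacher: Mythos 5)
Your unpacking of Parts (1)--(3) is essentially the content of the proof of Lemma~3.2 of \cite{MonNIP}, which the paper simply cites for (1) and (2); your direct treatment of Part (3), observing that Config~A and Config~B share the same tail obligations and that both head obligations come for free, is actually a clean shortcut compared to the paper's route through Condensation plus Clauses (1)--(2). One point to be careful about: the f.s.\ dichotomy as stated in the paper is only over $M$ with a \emph{finite} tuple $\abar$, whereas you invoke it repeatedly with base $A_{I_0}D$ or $A_{<i}D$, an infinite set strictly containing $M$. You need to explain why the dichotomy lifts to such bases (this is where fullness of $D$ and stationarity of f.s.\ types over full bases must enter); this is not a fatal gap, but it is not automatic either.

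The uniqueness clause, however, has a genuine error, not just a missing detail. You write that ``transitivity with $\tp(\abar_i/A_{<i}D)$ f.s.\ upgrades $\tp(b/A_{<i}D)$ f.s.\ to $\tp(b/A_{\le i}D)$ f.s., forcing $i\in I_0$.'' There is no such transitivity principle: from $\tp(\abar_ib/A_{<i}D)$ being finitely satisfiable in $M$ (even over a full base), one \emph{cannot} conclude that $\tp(b/A_{<i}D\abar_i)$ is finitely satisfiable in $M$. In DLO, take $a,b$ realizing the same cut over a full $D\supseteq M$ lying inside the shadow of $M$, with $b>a$: then $\tp(ab/D)$ is f.s.\ in $M$, but $\tp(b/Da)$ includes $b>a$ and is not f.s.\ in $M$. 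So your claim that any insertion point $i\in I$ is forced into $I_0$ is unjustified, and I do not see how a pure f.s.\ computation would yield it. The paper's proof of the Furthermore clause is structurally different: it translates, via Lemma~\ref{f.s.indisc}, any successful insertion of $b$ into a statement that the two sides of the sequence are mutually indiscernible over $Db$ (or $D\abar_{i^*}b$), and then applies the combinatorial Lemma~\ref{bigind} on indiscernible sequences (built on Lemmas~\ref{triple} and~\ref{cutsingleton}) to pin down the allowed cut or insertion points. That passage from the f.s.\ side to the indiscernibility side is exactly the idea your sketch is missing, and you rightly flag the uniqueness step as the obstacle.
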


\begin{proof}
	The proof of Clauses (1) and (2) is explicitly contained in the proof of \cite[Lemma 3.2]{MonNIP}. The ``if'' direction of Clause (3) is immediate from Condensation (\cite[Definition 2.5]{MonNIP}). 
	
	For the ``only if'' direction of Clause (3),  suppose $b$ can be inserted at $i_0^*$. Then we must show that $(\abar_i:i\in I_0)^\smallfrown (b)^\smallfrown (\abar_i:i\in I_1)$ is an $M$-f.s.\ sequence over $D$. If $i_1^*$ does not exist, this follows from Clause (1). If $i_1^*$ does exist, then Clause (2) gives that $(\abar_i:i<i_1^*)^\smallfrown (\abar_{i_1^*} b)^\smallfrown (\abar_i:i>i_1^*)$ is an $M$-f.s.\ sequence over $D$, and together with our assumption that $(\abar_i:i<i_0^*)^\smallfrown (\abar_{i_0^*} b)^\smallfrown (\abar_i:i>i_0^*)$ is an $M$-f.s.\ sequence over $D$, we are finished.

	We now prove the Furthermore clause. We recall Lemma \ref{f.s.indisc}, which we will use repeatedly without mention. Suppose $b$ can be inserted at some index $i^*$, which possibly represents a new cut. By the definition of $I_0$, we cannot have $i^*$ above any element of $I_1$. If $i^*_1$ exists and $i^* \in I$ with $i^* \neq i^*_1$, then by Lemma \ref{bigind}(3) there is a cut $I = J_0  ^\smallfrown J_1$ such that $i^*$ is the maximal element of $J_0$ and $i^*_1$ is the minimal element of $J_1$, so $I_0 = J_0$.
	
	We cannot have that $i^*_1$ does not exist and $i^* \not\in I$ defines a cut other than $I_0  ^\smallfrown I_1$, since this would contradict Lemma \ref{bigind}(1). The case where $i^*_1$ does not exist and $i^* \in I$ with $i^* \neq i^*_0$ is ruled out by Lemma \ref{bigind}(2), which also rules out case where $i^*_1$ exists and $i^* \not\in I$ defines a cut other than $I_0  ^\smallfrown I_1$.
\end{proof}

The following Corollary further clarifies the situation.

\begin{corollary} \label{special} Let $I,M,D$, $(\abar_i:i\in I)$ and $b\in\C$ be as in  Lemma~\ref{fsversion}.   Suppose  $(\abar_i:i\in I)$ is not indiscernible over $b$ and let $I=I_0^\smallfrown I_1$
	be the cut where $I_0$ is the maximal initial segment of $I$ satisfying $\tp(b/A_{I_0}D)$ is finitely satisfied in $M$.  Then:
	\begin{enumerate}[(1)]
		\item  The only possible 
		$i^*\in I$ such that $(\abar_i:i<i^*)^\smallfrown (\abar_{i^*} b)  ^\smallfrown (\abar_i:i>i^*)$ is an $M$-f.s.\ sequence over $D$ are the maximal element of $I_0$ or the minimal element of $I_1$.
		In particular, there are at most two such $i^*\in I$.
		\item  If $(I,\le)$ is dense, then there is at most one such $i^*\in I$.
		\item  If  $(I,\le)$ is Dedekind complete, then there is at least one such $i^*\in I$.
	\end{enumerate}
\end{corollary}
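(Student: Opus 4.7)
The plan is to read off each clause from the Furthermore part of Proposition~\ref{fsversion}, which enumerates exactly three ways of inserting $b$ to obtain an $M$-f.s.\ sequence over $D$: as a fresh element placed at the cut between $I_0$ and $I_1$; attached to $\abar_{i_1^*}$, where $i_1^*$ is the minimum of $I_1$ when it exists; or attached to $\abar_{i_0^*}$, where $i_0^*$ is the maximum of $I_0$ when it exists. Only the latter two produce an insertion position lying in $I$, and since $I_0$ and $I_1$ are disjoint, they yield distinct points whenever both exist. This gives Clause~(1) immediately.

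For Clause~(2), suppose $(I,\le)$ is dense. If both $i_0^*$ and $i_1^*$ existed, they would form a pair of consecutive elements of $I$ with nothing strictly between them, contradicting density; hence at most one of $i_0^*$, $i_1^*$ exists and the count from Clause~(1) drops to at most one.

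For Clause~(3), I would first verify that $I_1\neq\emptyset$: otherwise $\tp(b/A_{I}D)$ is finitely satisfied in $M$, and by \cite[Lemma 2.22]{MonNIP} $(\abar_i:i\in I)$ would be indiscernible over $Db$, contrary to the standing hypothesis. When $I_0$ is also nonempty, $I_0$ is bounded above by any element of $I_1$, so Dedekind completeness produces $s=\sup I_0 \in I$, and $s$ must be either the maximum of $I_0$ (so $s=i_0^*$) or the minimum of $I_1$ (so $s=i_1^*$, since any element of $I_1$ below $s$ would bound $I_0$). The main step requiring care is the residual possibility $I_0=\emptyset$, forcing $I=I_1$; here the existence of a minimum of $I$, and hence of $i_1^*$, is precisely the content of the second case in the proof of Lemma~\ref{1.1}, so no new argument is needed. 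Putting the cases together yields at least one admissible $i^* \in I$.
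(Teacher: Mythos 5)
Your proofs of Clauses (1) and (2) are correct and match the paper's, which likewise reads Clause (1) off the Furthermore part of Proposition~\ref{fsversion} and deduces Clause (2) by observing density prevents $I_0$ from having a maximum while $I_1$ has a minimum.

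For Clause (3), the genuine issue is in your handling of the residual case $I_0=\emptyset$. You assert that here ``the existence of a minimum of $I$, and hence of $i_1^*$, is precisely the content of the second case in the proof of Lemma~\ref{1.1}.'' This is wrong on two counts. First, $(I,\le)$ is endless by hypothesis, so $I$ has no minimum; if $I_0 = \emptyset$ then $I_1 = I$ and $i_1^*$ does \emph{not} exist. Second, the ``second case'' in the claim inside Lemma~\ref{1.1} is the case where $I_1$ has \emph{no} minimal element, and the conclusion drawn there is that $I_0$ has a maximal element by Dedekind completeness --- a conclusion that again fails when $I_0$ is empty. So that citation cannot supply what you need, and the $I_0 = \emptyset$ branch of your argument is a gap. (The paper's own proof simply asserts ``either $i_0^*$ or $i_1^*$ must exist by Dedekind completeness,'' which requires both pieces of the cut to be nonempty; so the paper is implicitly assuming $I_0\neq\emptyset$. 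The difference is that the paper silently elides the case, whereas you attempt to discharge it and give an incorrect justification.) Your explicit verification that $I_1\neq\emptyset$ is a reasonable extra step, and your Dedekind-completeness argument for the main case $I_0,I_1\neq\emptyset$ is correct and matches the paper.
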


\begin{proof}  (1) Immediate from the statement of Lemma \ref{fsversion}.

	(2)  If $(I,\le)$ is dense, then it is not possible for $I_0$ to have a maximum and $I_1$ has a minimum, so we are done by (1).
	
	(3)  If $(I,\le)$ is Dedekind complete, then either $i_0^*$ or $i_1^*$ from Lemma \ref{fsversion} must exist. If $i_1^*$ exists, then we are finished by Clause (2) of Lemma \ref{fsversion}. If $i_1^*$ does not exist, then we are finished by Clauses (1) and (3) of Lemma \ref{fsversion}.
\end{proof} 

\begin{remark} {\em   Easy examples involving $T=DLO$ show that the remaining possibilities can indeed happen.
		\begin{itemize}
			\item  If $a_q=q$ for all $q\in\Q$, then for $b=\pi$ and every element of $M$ is larger than every rational,
			then $(a_q:q<\pi)^\smallfrown (\pi)^\smallfrown (a_q:q>\pi)$ is an $M$-f.s.\ sequence, but 
			there is no $q\in \Q$ such that $(a_i:i<q)^\smallfrown (a_qb) ^\smallfrown (a_i:i>q)$ is an $M$-f.s.\ sequence.
			\item  If $a_i=i$ for all $i\in\Z$, $b=\pi$, every element of $M$ is larger than every rational,
			then $(a_i:i<\pi)^\smallfrown (\pi)^\smallfrown (a_i:i>\pi)$ is an $M$-f.s.\ sequence, but also both
			$(a_i:i<3)^\smallfrown (3^\smallfrown b)^\smallfrown (a_i:i\ge 4)$ 
			and 
			$(a_i:i\le 3)^\smallfrown (b^\smallfrown 4)^\smallfrown  (a_i:i> 4)$
			are $M$-f.s.\ sequences.
		\end{itemize}
	}
\end{remark}

\begin{proposition}[(Cut Lemma, indiscernible version)]  \label{indiscversion}  Suppose $T$ is monadically NIP and $(I,\le)$ is endless.  
	Suppose $(\abar_i:i\in I)$ is indiscernible and $b\in \C$ is a singleton such that $(\abar_i : i \in I)$ is not indiscernible over $b$.
	Then one of the following holds.
	\begin{enumerate}[(1)]
		\item  There is a cut $I=I_0^\smallfrown I_1$
		such that $(\abar_i:i\in I_0)$ and $(\abar_i:i\in I_1)$ are mutually indiscernible over $b$.
		
		Furthermore in this case, the cut is unique. And $i^* \in I$ is such that $(\abar_i:i<i^*)$ and $(\abar_i:i>i^*)$ are mutually indiscernible over $b$ if and only if $i^*$ is either the maximal element of $I_0$ or the minimal element of $I_1$.
		\item  There is a unique $i^*\in I$ such that $(\abar_i:i<i^*)$ and $(\abar_i:i>i^*)$ are mutually indiscernible over $b$. 
		
		Furthermore in this case, they are mutually indiscernible over $a_{i^*}b$. 
	\end{enumerate}
\end{proposition}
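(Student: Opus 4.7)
My approach is to reduce the claim to the already-established f.s.\ version, Proposition~\ref{fsversion}, via the translation Lemma~\ref{f.s.indisc}, using Lemma~\ref{bigind} to extract the uniqueness clauses.

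First, invoke \cite[Lemma 2.20]{MonNIP} to produce a model $M\subseteq D$ with $D$ full over $M$ and $(\abar_i:i\in I)$ an $M$-f.s.\ sequence over $D$. Let $I_0$ be the maximal initial segment such that $\tp(b/\abar_{I_0}D)$ is finitely satisfied in $M$, giving a cut $I=I_0{}^\smallfrown I_1$, and let $i_0^*,i_1^*$ denote the maximum of $I_0$ and the minimum of $I_1$ when they exist. Proposition~\ref{fsversion} gives one of two canonical $M$-f.s.\ insertions of $b$: when $i_1^*$ does not exist, the cut-insertion of $b$ between $I_0$ and $I_1$ is $M$-f.s.\ over $D$; when $i_1^*$ exists, the point-insertion at $i_1^*$ (appending $b$ to $\abar_{i_1^*}$) is $M$-f.s.\ over $D$. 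Lemma~\ref{f.s.indisc} then converts the $M$-f.s.\ statement into mutual indiscernibility of the two sides, over $Db$ in the first case and over $D\abar_{i_1^*}b$ in the second; dropping $D$ yields mutual indiscernibility over $b$ or $\abar_{i_1^*}b$ respectively.

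When $i_1^*$ does not exist, the cut $I_0{}^\smallfrown I_1$ witnesses outcome~(1). When $i_1^*$ does exist, I ask whether $i^*=i_1^*$ is the unique element of $I$ at which mutual indiscernibility over $b$ holds. If yes, outcome~(2) follows, with the ``furthermore'' being the stronger mutual indiscernibility over $\abar_{i_1^*}b$ derived above. If not, pick a distinct $i'$ with mutual indiscernibility over $b$; since the hypothesis forbids $\II$ from being indiscernible over $b$, Lemma~\ref{bigind}(3) applied to the pair $(i_1^*,i')$ forces the existence of a cut $J_0{}^\smallfrown J_1$ (with $i_1^*$ and $i'$ as its extremal elements) having mutual indiscernibility over $b$, thereby witnessing outcome~(1).

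It remains to verify the furthermore clauses. In outcome~(1) with witnessing cut $J_0{}^\smallfrown J_1$, Lemma~\ref{bigind}(1) delivers cut-uniqueness, and Lemma~\ref{bigind}(2) combined with a short direct computation (removing the extremal element from one side of a mutually indiscernible pair clearly preserves mutual indiscernibility over $b$) establishes that $i^*\in I$ works if and only if $i^*$ is the maximum of $J_0$ or the minimum of $J_1$. In outcome~(2), uniqueness of $i^*$ is enforced precisely by the branching step above. The subtlest point is the case analysis itself: outcomes~(1) and~(2) are not a priori mutually exclusive, and when $i_1^*$ exists there may genuinely coexist both a point and a cut witness. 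One must branch so that uniqueness in outcome~(2) is only asserted in the sub-case where no competing point exists, which is exactly the content of Lemma~\ref{bigind}(3) in this setup.
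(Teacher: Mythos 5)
Your proof is correct and follows essentially the same path as the paper's: set up an $M$-f.s.\ sequence over a full $D$ via Lemma 2.20 of \cite{MonNIP}, apply the f.s.\ Cut Lemma (Proposition~\ref{fsversion}), translate to mutual indiscernibility via Lemma~\ref{f.s.indisc}, and derive the uniqueness clauses from Lemma~\ref{bigind}. The only difference is organizational: the paper branches on whether the cut-insertion $(\abar_i:i\in I_0)^\smallfrown(b)^\smallfrown(\abar_i:i\in I_1)$ is $M$-f.s.\ over $D$, sending the two alternatives directly to outcomes~(1) and~(2), whereas you branch on whether $i_1^*$ exists and then, when it does, on whether $i_1^*$ is the unique witness, invoking the contrapositive of Lemma~\ref{bigind}(3) to recover a witnessing cut for outcome~(1) when uniqueness fails. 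Your sub-branch makes explicit a point the paper compresses into its terse appeal to \ref{bigind}(3); otherwise the two arguments deploy the same lemmas and reach the same conclusions.
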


\begin{proof}   
    	As in the beginning of Lemma \ref{1.1}, choose $M$ and a full $D \supset M$ so that $\tp(b/D)$ is finitely satisfiable in $M$, and $(\abar_i:i\in I)$ is an $M$-f.s.\ sequence over $D$ and is indiscernible over $D$.
	We  apply the f.s.\ Cut Lemma.   If there is a cut $I=I_0^\smallfrown I_1$ such that $(\abar_i:i\in I_0)^\smallfrown(b)^\smallfrown (\abar_i:i\in I_1)$ form an $M$-f.s.\ sequence over $D$,
	then $(\abar_i:i\in I_0)$ and $(\abar_i:i\in I_1)$ are mutually indiscernible over $b$ by Lemma~\ref{f.s.indisc}.  The uniqueness of the cut follows from Lemma \ref{bigind}(1).  If $i^*$ is either the maximal element of $I_0$ or the minimal element of $I_1$, it is immediate that $(\abar_i:i<i^*)$ and $(\abar_i:i>i^*)$ are mutually indiscernible over $\abar_{i^*}b$. The ``only if'' part about $i^*$ follows from Lemma \ref{bigind}(2).
	
	If there is no cut $I=I_0^\smallfrown I_1$ such that $(\abar_i:i\in I_0)^\smallfrown(b)^\smallfrown (\abar_i:i\in I_1)$ form an $M$-f.s.\ sequence over $D$, then there is $i^* \in I$ such that $(\abar_i:i<i^*)^\smallfrown (\abar_{i^*}b)^\smallfrown(\abar_i:i>i^*)$ is an $M$-f.s. sequence over $D$, and so by Lemma \ref{f.s.indisc}, $(\abar_i:i<i^*)$ and $(\abar_i:i>i^*)$ are mutually indiscernible over $a_{i^*}b$. The uniqueness of $i^*$ follows from Lemma \ref{bigind}(3).
\end{proof}

\section{Distality in hereditary classes and monadic distality}
This section contains some collapse results regarding (monadic) distality. It is not immediate that distality of a hereditary class implies monadic NIP, but it implies NIP and by \cite[Theorem 4.6]{EMonNIP}, this implies monadic NIP. The first collapse result is  that monadic distality is equivalent to monadic NIP and distality, mirroring the collapse of monadic NIP and stability to monadic stability. The second is the collapse of distality to monadic distality in hereditary classes, mirroring similar results for monadic stability and monadic NIP from \cite{EMonNIP}. After these collapse results, we begin studying distality and distal expansions in hereditary classes, showing that various natural definitions of distality coincide and proving that classes of bounded twin-width admit distal expansions in a way that also lifts to infinite models of their universal theory.

\subsection{Monadic distality}

\begin{definition} \label{def:dist}
A complete theory $T$ is \emph{distal} if for every indiscernible sequence $I = I_1 ^\smallfrown I_2 ^\smallfrown I_3$ such that $I_1$ has no maximum, $I_2$ no maximum or minimum, and $I_3$ no minimum, and for every $\abar, \bbar$, if $I_1 ^\smallfrown \abar  ^\smallfrown I_2 ^\smallfrown I_3$ and $I_1 ^\smallfrown I_2 ^\smallfrown \bbar ^\smallfrown I_3$ are each indiscernible, then so is $I_1 ^\smallfrown \abar  ^\smallfrown I_2 ^\smallfrown \bbar  ^\smallfrown I_3$.

An incomplete theory $T$ is {\em distal} if $Th(M)$ is distal for every $M\models T$.

A (possibly incomplete) theory $T$ is {\em monadically distal} if
$Th(M^+)$ is distal for every monadic expansion $M^+$ of every $M\models T$.
\end{definition}

Note that unlike many model-theoretic properties such as (monadic) stability/NIP, (monadic) distality is not preserved under reducts; for example, the theory of equality is not distal. Furthermore, the example below shows monadic distality is not preserved under passing to substructure. Nevertheless, we will recover various results for monadic distality paralleling those for monadic stability and monadic NIP.

 \begin{example}[(A theory $T$ that is monadically distal, but $T_\forall$ is not distal)] \label{ex:distsub}
\emph{Let $\LL = \set{U, V, <, E}$ be a relational language with $U, V$ unary and $<,E$ binary. Let $M$ be an infinite $\LL$-structure where $U, V$ partition the domain, each point is $E$-related to a unique point, any two $E$-related points are in different unary relations, and $<$ is a linear order on $U(M)$. It is easy to check that $M$ is monadically distal. However, the substructure $V(M)$ is a pure set and so is not distal.}
\end{example}

\begin{proposition}\label{prop:distobst}
	If $T$ is a theory that is monadically NIP but not monadically distal, then some $M \models T$ contains an infinite totally indiscernible set of singletons.
	\end{proposition}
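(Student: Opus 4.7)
Assume $T$ is monadically NIP but not monadically distal. Pick $M \models T$ and a monadic expansion $M^+$ such that $T' := \mathrm{Th}(M^+)$ is not distal. Then $T'$ is itself monadically NIP, and any infinite totally indiscernible set of singletons in $M^+$ remains totally indiscernible in the reduct $M$ (the universe is unchanged and the reduct has fewer formulas). So it suffices to produce such a set in a model of $T'$; we therefore replace $T$ by $T'$ and assume $T$ is complete, monadically NIP, and not distal.

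I would first extract a non-distality witness and reduce it to a ``single-singleton'' form. By Definition \ref{def:dist} there is an indiscernible sequence $\II = (\cbar_i : i \in I)$ with $I = I_1 ^\smallfrown I_2 ^\smallfrown I_3$ and tuples $\abar, \bbar$ such that each single insertion is indiscernible while the joint insertion is not. The goal of the reduction is to obtain singletons $c_i$ and a single singleton $d$ playing both the roles of $\abar$ and $\bbar$: inserting $d$ at the $I_1/I_2$ cut or at the $I_2/I_3$ cut individually yields an indiscernible sequence, but inserting $d$ at both cuts simultaneously does not. The reduction exploits the preservation of monadic NIP under adjoining unary predicates to isolate a coordinate, and is modeled on the prototypical case of the theory of equality, where $\abar = \bbar$ is forced by duplication.

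Given the reduced witness, I would apply the Cut Lemma. By Theorem \ref{thm:mNIPchar}, $T$ is dp$^+$-minimal, so Proposition \ref{indiscversion} applied to $d$ against $\II$ gives that either $\II$ is indiscernible over $d$, or $d$ has a unique insertion point in $\II$ (in the sense of the two cases of that proposition). The latter is excluded, since by hypothesis $d$ can be inserted at two distinct cuts. Hence $\II$ is indiscernible over $d$.

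Finally, since $\II$ is indiscernible over $d$, the type $\tp(d/\II)$ extends to an $\II$-invariant global $1$-type $p$ via a standard averaging construction on a sufficiently long indiscernible extension of $\II$ over $d$. By the stability/distality dichotomy for global invariant $1$-types in monadically NIP theories (recalled in the introduction), $p$ is either generically stable or distal. If $p$ were distal, the rigidity of distal types under duplication would force the joint insertion of $d$ at two cuts to remain indiscernible, contradicting our witness; hence $p$ is generically stable, and any Morley sequence of $p$ is an infinite totally indiscernible set of singletons, as required. The main obstacle is the reduction step: extracting the single-singleton witness from the general tuple witness requires exploiting monadic NIP combinatorics carefully, both to cut down to singletons in the sequence and to force $\abar$ and $\bbar$ to coalesce. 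A secondary delicate point is making precise why distality of $p$ precludes the joint-insertion failure.
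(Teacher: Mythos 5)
Your overall strategy is genuinely different from the paper's, and it has gaps that you yourself flag but do not close. The paper's proof, after the same initial reduction to a complete, monadically NIP, non-distal theory $T'$, is a direct application of two black-box results from Simon's Guide: since $T'$ is dp-minimal (being monadically NIP), Corollary 9.19 there says non-distality is equivalent to the existence of an unrealized generically stable global invariant $1$-type $p'$, and Theorem 2.29 says any Morley sequence of a generically stable type is totally indiscernible. That gives the totally indiscernible set of singletons immediately, with no need to touch the distality witness itself.

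Your approach instead tries to manufacture the generically stable type by hand from a non-distality witness, and the two steps you mark as obstacles are real gaps, not polish. First, the reduction to a ``single-singleton'' witness with $\abar=\bbar=d$: this is not a routine monadic-expansion manoeuvre. Naming coordinates of the $\cbar_i$ by unary predicates preserves monadic NIP and makes the coordinate sequences indiscernible, but the failure of the joint insertion is a statement about whole tuples $\abar,\bbar$, and it is far from clear that passing to a coordinate preserves a distality witness, let alone that $\abar$ and $\bbar$ can be coalesced into a single singleton $d$. (Your appeal to the theory of equality, where duplication forces $\abar=\bbar$, is an intuition about one example, not an argument.) Second, even granting that reduction, the claim that distality of $p$ would ``force the joint insertion to remain indiscernible'' is not substantiated; the relevant rigidity property of distal types needs to be stated and applied to a realization of $p$, whereas your $d$ is a fixed element, so there is at least a transfer step missing. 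The intermediate step you do have right — that two distinct insertion cuts force $\II$ to be indiscernible over $d$ — is indeed Lemma \ref{bigind}(1) via Lemma \ref{triple}, so that part is sound. But in total you are attempting to re-derive a nontrivial structural fact about dp-minimal theories that the paper simply cites, and the re-derivation is not carried out.
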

\begin{proof}
	Passing to an appropriate completion of $T$ and renaming, we may assume $T$ is complete. Let $T'$ be a monadic expansion of $T$ that is not distal. Since $T$ is monadically NIP, so is $T'$, and so $T'$ is dp-minimal. By \cite[Corollary 9.19]{Guide}, if a theory is dp-minimal, then it is not distal if and only if some unrealized global invariant 1-type is generically stable. Let $p'(x)$ be such a type in $T'$, and consider a Morley sequence $\II'$ in $p'$ (as described after \cite[Lemma 2.23]{Guide}). Then by \cite[Theorem 2.29]{Guide}, $\II'$ is totally indiscernible, and thus so is the $T$-reduct $\II$.
	\end{proof}

\begin{corollary} \label{cor:distcoll1}
	Let $T$ be a theory. Then $T$ is monadically distal if and only if $T$ is distal and monadically NIP.
\end{corollary}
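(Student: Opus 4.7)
The plan is to handle the two directions separately, with the forward one being essentially formal. If $T$ is monadically distal, then taking the trivial monadic expansion (adding no unary predicates) shows $T$ itself is distal, and since every monadic expansion of every $M \models T$ is distal and hence NIP, $T$ is also monadically NIP. The substantive content lies in the converse.

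For the converse, suppose $T$ is distal and monadically NIP but, for contradiction, not monadically distal. Proposition~\ref{prop:distobst} then supplies a model $M \models T$ containing an infinite totally indiscernible set of distinct singletons $A$. Since $T$ is distal, $\mathrm{Th}(M)$ is distal, and the remaining task is to derive a contradiction from the coexistence of $A$ with the distality of $\mathrm{Th}(M)$.

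For this final step I would invoke the standard fact (recorded in \cite{Guide}) that in an NIP theory an infinite totally indiscernible set of distinct singletons is a Morley sequence of a nonrealized generically stable $1$-type --- namely, the average type of $A$ over a sufficiently saturated elementary extension. Since a distal NIP theory admits no nonrealized generically stable types (one of the standard characterizations of distality, in the spirit of the ``generically stable or distal'' dichotomy mentioned in the introduction), this yields the required contradiction.

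The main --- and really quite minor --- obstacle is simply locating the precise citations in \cite{Guide} for (i) the passage from an infinite totally indiscernible set of singletons in an NIP theory to a nonrealized generically stable $1$-type, and (ii) the characterization of distal NIP theories as those without nonrealized generically stable types. No new model-theoretic argument is needed beyond Proposition~\ref{prop:distobst} and these two well-documented facts, so the proof is expected to be a few lines long.
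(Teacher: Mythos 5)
Your proof is correct and takes the same overall approach as the paper: both directions, with the forward direction being formal and the converse going through Proposition~\ref{prop:distobst} to produce an infinite totally indiscernible set of singletons. The only divergence is in the final contradiction: the paper simply asserts that the existence of an infinite totally indiscernible set already contradicts that $T$ is distal (the elementary fact that a nonconstant totally indiscernible sequence violates the indiscernible-sequence definition of distality --- insert the same new element $a$ at two different cuts of the sequence and observe the resulting sequence has a repeated element), whereas you take a slight detour through generically stable types, recovering the average type of the totally indiscernible set as a nonrealized generically stable $1$-type and then invoking that distal NIP theories have none. Your route is perfectly valid and echoes the machinery already used in the proof of Proposition~\ref{prop:distobst} (which is precisely where \cite[Corollary 9.19, Theorem 2.29]{Guide} are cited), but it is more circuitous than needed; the direct combinatorial observation suffices and avoids the need to pin down the auxiliary citations you flag.
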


\begin{proof}
	The forward direction is clear, so suppose that $T$ is monadically NIP and distal. If $T$ is not monadically distal, then by the previous lemma some $M \models T$ contains an infinite totally indiscernible set, contradicting that $T$ is distal.
	\end{proof}

\subsection{Distality in hereditary classes}

\begin{definition}
    A \emph{hereditary class} is a class of structures closed under substructure.

    Given a class $\CC$ of structures, we define $Th(\CC)$ as $\bigcap_{M \in \CC} Th(M)$.
\end{definition}

In addition to Definition \ref{def:dist}, distality can be characterized in terms of strong honest definitions, which we now describe. This can be stated entirely at the level of a class of structures $\CC$, rather than having to look at models of $Th(\CC)$.

\begin{definition}
    Let $\CC$ be a class of structures. A formula $\phi(\xbar, \ybar)$ \emph{admits a strong honest definition in $\CC$} if there is a formula $\psi(\xbar, \zbar)$ such that for all $M \in \CC$, all finite $A \subset M$ with $|A| \geq 2$, and all $\bbar \in M^{|\xbar|}$, there is some $\dbar \in A^{|\zbar|}$ such that $M \models \psi(\bbar, \dbar)$ and all elements of $\psi(M, \dbar)$ realize the same $\phi$-type over $A$.

    We say $\phi$ \emph{admits a strong honest definition in $M$} if it does so in $\set{M}$. We say a class $\CC$ of structures \emph{admits strong honest definitions} if every formula admits strong honest definitions in $\CC$, and that a (possibly incomplete) theory $T$ \emph{admits strong honest definitions} if the class of models of $T$ does.
\end{definition}

We remark that the definable cell decompositions described in \cite[Definition 2.7]{chernikov2020cutting} are nearly the same as strong honest definitions, but allow for more refined quantitative results, which we will not be concerned with.

\begin{fact}[({\cite[Theorem 21]{chernikov2015externally}})] \label{fact:shd}
A complete theory $T$ is distal if and only if it admits strong honest definitions.
\end{fact}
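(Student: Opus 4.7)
The plan is to establish each direction separately by contradiction, using the interplay between indiscernible sequences and uniform definability; the forward direction (distal $\Rightarrow$ SHD) is substantially more involved.

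For $(\Leftarrow)$ (strong honest definitions imply distal): I would work with the equivalent \emph{cut-insertion} form of distality, derivable from Definition \ref{def:dist} by a short manipulation, which says the type of a tuple inserted into a dense, endless indiscernible sequence at a fixed cut is uniquely determined once indiscernibility is preserved. Failure of distality then supplies an indiscernible sequence $(\abar_i)_{i \in \Q}$, a cut, and two distinct tuples $\bbar_1, \bbar_2$ such that each insertion $(\abar_i)_{i<0} ^\smallfrown \bbar_j ^\smallfrown (\abar_i)_{i>0}$ is indiscernible yet $\bbar_1, \bbar_2$ differ on some $\phi(\xbar, \cbar)$ with $\cbar$ from the sequence. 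Applying the strong honest definition $\psi(\xbar, \zbar)$ of $\phi$ to $\bbar_1$ over the finite set $A$ containing $\cbar$ and enough sequence elements on both sides of the cut yields $\dbar \in A^{|\zbar|}$ with $\psi(\bbar_1, \dbar)$ and $\psi(\C, \dbar)$ constant in $\phi$-type over $A$. Since $\dbar$ lies in the sequence and the two insertions agree on all formulas over sequence parameters (by indiscernibility), $\bbar_2$ also lies in $\psi(\C, \dbar)$, contradicting $\phi$-constancy.

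For $(\Rightarrow)$ (distal implies strong honest definitions): Fix $\phi(\xbar, \ybar)$ and suppose it admits no strong honest definition. A compactness argument ranging simultaneously over all candidate defining formulas $\psi(\xbar, \zbar)$ produces in the monster $\C$ an indiscernible sequence $(\abar_i)_{i \in \Q}$ and a tuple $\bbar$ with the following non-isolation property: for every $\psi(\xbar, \zbar)$ and every $\dbar$ from the sequence with $\psi(\bbar, \dbar)$ holding, the set $\psi(\C, \dbar)$ contains a realization of a distinct $\phi$-type over some finite tuple of sequence parameters. An Erd\H os-Rado thinning then extracts two tuples $\bbar_1, \bbar_2$ and a common cut such that each single insertion $(\abar_i)_{i<*} ^\smallfrown \bbar_j ^\smallfrown (\abar_i)_{i>*}$ is indiscernible, yet $\bbar_1$ and $\bbar_2$ differ on $\phi$ relative to some sequence element, directly violating the cut-insertion form of distality.

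The main obstacle is the extraction step in $(\Rightarrow)$: turning a universally quantified failure over all candidate formulas $\psi$ into a single indiscernible configuration with two distinct insertions at a common cut. This requires iterated compactness and Ramsey-style thinning to simultaneously guarantee indiscernibility of each candidate insertion together with a persistent $\phi$-separation between them; the condition $|A| \geq 2$ in the definition of strong honest definition is precisely what prevents the $\phi$-separation from being absorbed into a trivial isolating parameter during the refinement.
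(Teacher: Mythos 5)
The paper does not prove this statement; it is cited verbatim from Chernikov--Simon (Theorem 21 of \emph{Externally definable sets and dependent pairs II}), so there is no in-paper argument to compare against. Evaluating your sketch on its own terms, there is a genuine gap in the $(\Leftarrow)$ direction. The ``cut-insertion'' form you invoke is misstated: if $I$ is an endless indiscernible sequence, $I = I_1 ^\smallfrown I_2$ a cut, and $\bbar_1, \bbar_2$ are both tuples (of the same length as the $\abar_i$) such that $I_1 ^\smallfrown \bbar_j ^\smallfrown I_2$ is indiscernible for $j=1,2$, then $\tp(\bbar_1/\bigcup I) = \tp(\bbar_2/\bigcup I)$ \emph{automatically}, in any theory, because the EM-type of each extended sequence agrees with the EM-type of $I$ itself. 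So the configuration you claim to extract from a failure of distality --- $\bbar_1, \bbar_2$ separated by a formula $\phi(\xbar,\cbar)$ with $\cbar$ from the sequence --- cannot occur. The correct insertion-form characterization of distality involves an auxiliary base set $A$ over which $I$ (but not necessarily the extended sequences) is indiscernible, and asserts that the inserted tuple's type over $A \cup I$, not merely over $I$, is determined by the cut. Once $A$ is present, the rest of your argument also breaks: the strong honest definition parameter $\dbar$ lands in $A \cup (\text{finitely many sequence elements})$, and when $\dbar$ meets $A$ you can no longer conclude that $\bbar_2$ satisfies $\psi(\xbar,\dbar)$, which was the intended contradiction.

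For the $(\Rightarrow)$ direction, your sketch names the correct difficulty (turning a failure of SHD uniformly over all candidate $\psi$ into a single indiscernible configuration) but does not resolve it, and I do not believe an Erd\H os--Rado thinning alone will. The proof in the cited reference proceeds by first establishing \emph{weak} honest definitions for any NIP theory (this already uses the $(p,q)$-theorem / VC duality, not just Ramsey-type extraction) and then uses distality to upgrade them to strong honest definitions; the ``extraction step'' you flag is precisely where this additional machinery enters. As written, your proposal reduces the hard direction to an unspecified compactness-plus-thinning argument aimed at the incorrect cut-insertion form from the first paragraph, so it does not constitute a proof.
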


The following lemma extends this fact to incomplete theories of the form $Th(\CC)$. The main point is $(1) \Ra (2)$ that from knowing only that $\phi$ admits a strong honest definition separately in each completion of $Th(\CC)$, we may deduce the uniformity result that there is a single strong honest definition for $\phi$ that works across all completions of $Th(\CC)$. This shows that there is a single robust definition of distality for a class of structures.

\begin{lemma} \label{lemma:dist defns}
	Let $\CC$ be a class of $\LL$-structures. The following are equivalent.
	\begin{enumerate}[(1)]
		\item $Th(\CC)$ is distal.
		\item $Th(\CC)$ admits strong honest definitions.
		\item $\CC$ admits strong honest definitions.
	\end{enumerate}
\end{lemma}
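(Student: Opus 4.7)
The plan is to prove the implications cyclically, with three easy directions and one main direction. First, $(2) \Ra (3)$ is immediate, since $\CC \subseteq \mathrm{Mod}(Th(\CC))$, so a $\psi$ uniformly witnessing SHD over the latter does so over the former. For $(3) \Ra (2)$, the central observation is that the property ``$\psi(\xbar, \zbar)$ is a strong honest definition for $\phi(\xbar, \ybar)$ in $M$'' unpacks as a first-order theory on $M$: for each $n \geq 2$ there is a single first-order sentence $\sigma_{\psi, n}$ asserting the SHD condition specifically for parameter sets of size $n$ (quantifying universally over $\bbar$ and pairwise distinct $(y_1, \dots, y_n)$, existentially over $\dbar \in \{y_1, \dots, y_n\}^{|\zbar|}$, and then universally over the putative alternative realizer $\xbar'$). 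If $\psi$ is an SHD in every $M \in \CC$, each $\sigma_{\psi, n}$ lies in $Th(\CC)$, so $\psi$ remains an SHD in every model of $Th(\CC)$. For $(2) \Ra (1)$, any uniform SHD for $Th(\CC)$ restricts to each completion $T'$, so $T'$ is distal by Fact \ref{fact:shd}, and hence $Th(\CC)$ is distal by definition.

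The main direction is $(1) \Ra (2)$: producing, for each $\phi$, a single $\psi$ that is uniformly an SHD across $\mathrm{Mod}(Th(\CC))$, given only that each completion $T'$ admits some $\psi_{T'}$ by Fact \ref{fact:shd} applied to the distal complete theory $T'$. The combining move is: for two distinct completions $T_1 \ne T_2$ with SHDs $\psi_1, \psi_2$, choose any sentence $\chi \in T_1 \setminus T_2$ and set
$$\psi^*(\xbar, \zbar_1, \zbar_2) \;:=\; (\chi \wedge \psi_1(\xbar, \zbar_1)) \vee (\lnot \chi \wedge \psi_2(\xbar, \zbar_2)).$$
In any model of $T_1$, $\psi^*$ is equivalent to $\psi_1$, and similarly for $T_2$, so $\psi^*$ is an SHD in every model of $T_1 \cup T_2$. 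Iterating handles any finite family of completions simultaneously.

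The main obstacle will be passing from finite to all completions. I plan to argue by contradiction and compactness: if no single $\psi$ worked uniformly, then the first-order theory $Th(\CC) \cup \{\lnot \sigma_{\psi, n_\psi} : \psi\in\Psi\}$ would be consistent for some choices $n_\psi$, and its model would have a distal complete theory (by (1)) admitting no SHD for $\phi$, contradicting Fact \ref{fact:shd}. Verifying the finite-subset consistency—that for any finitely many candidate SHDs $\psi_1,\dots,\psi_k$, a single model of $Th(\CC)$ refutes each of them—is the technical crux, and here the combining construction above does the work: if the list $\psi_1,\ldots,\psi_k$ admitted no common completion refuting all of them, then by the combining move some explicit $\psi^*$ built from the $\psi_i$ would be a uniform SHD, contradicting the assumption. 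Pinning down the choice of $n_\psi$ witnesses and the iteration of the combining move in full generality is where the delicate work sits.
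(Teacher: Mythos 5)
Your overall architecture matches the paper's: axiomatize the SHD condition for a fixed $\psi$ by a family of sentences $\theta_{\psi,n}$, handle $(2)\Leftrightarrow(3)$ and $(2)\Rightarrow(1)$ exactly as you do, and obtain $(1)\Rightarrow(2)$ by a compactness argument that requires a method for merging finitely many candidate strong honest definitions into a single one. You also correctly identify the finite-subset consistency step as the crux. The difference lies in the merging device, and here your version does not go through.

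Your combining move merges $\psi_1$ and $\psi_2$ by picking a sentence $\chi \in T_1 \setminus T_2$ and forming $(\chi \wedge \psi_1) \vee (\lnot\chi \wedge \psi_2)$. To run the finite-consistency step you need to combine $\psi_1,\ldots,\psi_k$ when each completion of $Th(\CC)$ has \emph{some} $\psi_i$ as an SHD, so you would need the partition of the space of completions by ``which $\psi_i$ works'' to be separated by sentences. But the set $\{T' : \psi_i \text{ is an SHD in } T'\} = \bigcap_n \{T' : \theta_{\psi_i,n} \in T'\}$ is an intersection of infinitely many clopen sets: it is closed but generally not open in the Stone space of completions, hence not cut out by a single sentence. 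In particular a completion where only $\psi_2$ works can be a limit of completions where $\psi_1$ works, and then no separating $\chi$ exists. So ``iterating the combining move'' does not produce a $\psi^*$ that is an SHD across all completions, and the contradiction you want in the finite-consistency argument is not available.

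The paper's device is the ``standard coding trick'' of \cite{guingona2012uniform}, which merges at the level of \emph{witnesses} rather than theories: given $\psi_1,\ldots,\psi_k$ one builds $\psi^*(\xbar,\zbar^*)$ (with extra $\zbar$-variables used as flags, exploiting $|A|\ge 2$) so that for \emph{any} $M$, $A$, $\bbar$, if some $\psi_i$ has a witness $\dbar \in A^{|\zbar_i|}$, then $\psi^*$ has a witness in $A^{|\zbar^*|}$. This is a pointwise statement independent of $Th(M)$, and it is exactly what is needed to turn ``no single $\psi$ is a uniform SHD'' into ``for any finite $\{\psi_i\}$ some model of $Th(\CC)$ refutes all of them,'' which is the finite-subset consistency you want. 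Replace your sentence-based combining with this parameter-coding version of the merge and the rest of your outline will align with the paper's proof.
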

\begin{proof}
	The key point is that for a fixed formula $\phi(\xbar; \ybar)$ and a fixed formula $\psi(\xbar; \zbar)$, we may express by a set of sentences that $\psi$ is a strong honest definition for $\phi(\xbar; \ybar)$. Namely, for each $n\ge 1$, we may write a sentence $\theta_n$ stating that for every choice of $n$ elements $a_1, \dots, a_n$, for one of the finitely-many choices of $\psi$-definable families of sets over these elements, this family covers the whole structure and the truth value of $\phi(\xbar; \abar)$ is constant on each set in this family for each of the finitely many choices of $\abar \in \set{a_1, \dots, a_n}^{|\ybar|}$. Using this, the equivalences follow by standard compactness arguments.
	
	$(1) \Ra (2)$ Suppose there is some $\phi(\xbar; \ybar)$ such that there is no strong honest definition for $Th(\CC)$. So for each formula $\psi(\xbar, \zbar)$, there is some $M \models Th(\CC)$ such that $\psi$ fails over some parameter set $a_1, \dots, a_{n_\Psi}$. As above, we may write a sentence $\chi_\psi$ saying that there exist elements $a_1, \dots, a_{n_\Psi}$ over which $\psi$ fails. Note that by the ``standard coding trick'' (as in \cite[Lemma 2.5]{guingona2012uniform}) if there is some finite set $\Psi$ of formulas such that for each $M \in \CC$, some $\psi \in \Psi$ serves as a strong honest definition for $\phi$ in $M$, then $\phi$ admits a strong honest definition in $\CC$.  So $Th(\CC) \cup \set{\chi_\psi | \text{formulas } \psi(\xbar, \zbar)}$ is finitely satisfiable, and thus satisfiable. Thus there is some $M \models Th(\CC)$ such that there is no strong honest definition for $\phi(\xbar; \ybar)$ in $M$.
	
	$(2) \Ra (1)$ is immediate from Fact \ref{fact:shd}.
	
	$(3) \Ra (2)$ Suppose $\CC$ admits strong honest definitions. Fix a formula $\phi(\xbar; \ybar)$ and let $\psi(\xbar; \zbar)$ be strong honest definition. By assumption, every $\theta_n$ as described in the first paragraph is in $Th(\CC)$. 
	
	$(2) \Ra (3)$ We may express the assumption of $(2)$ via sentences $\theta_n$ as in the first paragraph. As $M \models Th(\CC)$ for each $M \in \CC$, the same strong honest definitions will work for $M$.
	\end{proof}

 The equivalences in the following two results are analogous to results for monadic NIP and monadic stability in \cite{EMonNIP}. For the following proposition, recall that Example \ref{ex:distsub} shows passing monadic distality from a theory to its universal part is not in general automatic.

\begin{proposition} \label{prop:distcoll}
	Let $\CC$ be a hereditary class of relational structures. Then the following are equivalent.
	\begin{enumerate}[(1)]
		\item $Th(\CC)_\forall$ is monadically distal.
		\item $Th(\CC)_\forall$ is distal.
		\item $Th(\CC)$ is monadically distal.
		\item $Th(\CC)$ is distal.
	\end{enumerate}
	\end{proposition}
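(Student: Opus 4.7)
The implications $(1) \Rightarrow (2)$, $(1) \Rightarrow (3)$, $(2) \Rightarrow (4)$, and $(3) \Rightarrow (4)$ are immediate, since monadic distality strengthens distality and every model of $Th(\CC)$ is a model of $Th(\CC)_\forall$. It therefore suffices to prove the single nontrivial implication $(4) \Rightarrow (1)$, which the plan approaches by contradiction via the obstruction of Proposition \ref{prop:distobst}.

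From $(4)$, $Th(\CC)$ is distal and hence NIP. Since $\CC$ is a hereditary relational class, the monadic collapse for NIP cited as \cite[Theorem 4.6]{EMonNIP} upgrades this to $Th(\CC)_\forall$ being monadically NIP. By Corollary \ref{cor:distcoll1} it is enough to show $Th(\CC)_\forall$ is distal. Suppose for contradiction some monadic expansion $M^+$ of some $M \models Th(\CC)_\forall$ has non-distal theory. Monadic NIP of $Th(\CC)_\forall$ transfers to $Th(M^+)$ (any further monadic expansion of a model of $Th(M^+)$ is still a monadic expansion of a model of $Th(\CC)_\forall$), and a second application of Corollary \ref{cor:distcoll1} yields that $Th(M^+)$ is also not monadically distal. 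Proposition \ref{prop:distobst} then furnishes $M^* \models Th(M^+)$ containing an infinite totally indiscernible set of singletons $\II = \{a_i : i \in \omega\}$. Its $L$-reduct $M' := M^* \restrict L$ is a model of $Th(\CC)_\forall$ in which $\II$ remains totally indiscernible over $\emptyset$, because $L$-formulas are $L^+$-formulas.

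The crux is to transfer this obstruction from the model $M'$ of $Th(\CC)_\forall$ into an actual model of $Th(\CC)$. For each $n$, let $N_n \subseteq M'$ be the induced $L$-substructure on $\{a_0, \dots, a_{n-1}\}$. The standard fact that for hereditary relational $\CC$ every finite substructure of a model of $Th(\CC)_\forall$ lies in $\CC$ gives $N_n \in \CC$. Moreover, total indiscernibility of $\II$ in $M'$ implies that any permutation of $\{a_0, \dots, a_{n-1}\}$ preserves all atomic $L$-relations (which $N_n$ inherits from $M'$), and is therefore an $L$-automorphism of $N_n$. Consequently the tuple $(a_0, \dots, a_{n-1})$ is totally indiscernible over $\emptyset$ \emph{inside} $N_n$, not merely as elements of $M'$.

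Now fix a non-principal ultrafilter $\UU$ on $\omega$ and form $N := \prod_{n \in \omega} N_n / \UU$, which models $Th(\CC)$ as an ultraproduct of members. The diagonal elements $d_i := [(a_i^{N_n})_{n > i}]$ are pairwise distinct, and {\L}o\'s's theorem combined with the internal total indiscernibility of each $N_n$ shows $\{d_i : i \in \omega\}$ is an infinite totally indiscernible set of singletons in $N$. In any NIP theory such a set obstructs distality (stretch it in a saturated extension to order type $\omega + \omega^* + \omega + \omega^*$ with the required endpoint conditions, then insert the same additional element of the set at two distinct cuts, forcing a repetition that violates indiscernibility of the combined sequence), so $N$ is not distal and $Th(\CC)$ has a non-distal model, contradicting $(4)$. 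The main obstacle is this last transfer: the totally indiscernible set only involves qualitative data on finite substructures, but distality of $Th(\CC)$ is a full first-order property of its models, and it is precisely the extension of permutations of each $N_n$ to genuine $L$-automorphisms that upgrades the quantifier-free set-homogeneity of $N_n$ into full $L$-indiscernibility preserved by the ultraproduct.
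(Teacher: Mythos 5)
Your proof is correct and follows essentially the same strategy as the paper's: use the NIP collapse to get monadic NIP, invoke Proposition \ref{prop:distobst} to extract an infinite totally indiscernible set of singletons in a model of $Th(\CC)_\forall$, transfer total indiscernibility to finite substructures (your automorphism argument is a cleaner phrasing of the paper's ``induction on quantifier complexity''), and then push to a model of $Th(\CC)$ by compactness/ultraproduct to contradict distality. The detour through a monadic expansion $M^+$ and $Th(M^+)$ before applying Proposition \ref{prop:distobst} is harmless but unnecessary, since you have already established that $Th(\CC)_\forall$ itself is monadically NIP and by assumption not monadically distal, so the proposition applies directly.
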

\begin{proof}
	For the equivalences $(1) \iff (2)$ and $(3) \iff (4)$, note that if the theory $T$ in question is distal then it is NIP, and thus monadically NIP by \cite[Theorem 4.6]{EMonNIP}. Thus $T$ is monadically distal by Corollary \ref{cor:distcoll1}.
	
	Clearly $(1) \Ra (3)$, so we show $(3) \Ra (1)$. Suppose $Th(\CC)_\forall$ is not monadically distal. If it is not monadically NIP, then neither is $Th(\CC)$ by \cite[Proposition 2.5]{EMonNIP}, and we are finished, so assume it is monadically NIP. By Proposition \ref{prop:distobst}, there is some $M \models Th(\CC)_{\forall}$ containing an infinite totally indiscernible set $\II = \set{a_i | i \in \omega}$. By induction on quantifier complexity, any finite subset of $\II$ considered as structure is still totally indiscernible. Thus $\CC$ contains arbitrarily large totally indiscernible sets, and so by compactness $Th(\CC)$ has a model containing an infinite totally indiscernible set, contradicting distality.
	\end{proof}

\begin{lemma}
	Let $\CC$ be a hereditary class of relational structures. Then the following are equivalent.
	\begin{enumerate}[(1)]
		\item $Th(\CC)$ is monadically distal.
		\item For every monadic expansion $\CC^+$ of $\CC$, $Th(\CC^+)$ is distal.
	\end{enumerate}
\end{lemma}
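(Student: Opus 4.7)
The plan is to verify each direction directly, with the harder direction $(2) \Ra (1)$ reducing to the already-proved Proposition \ref{prop:distcoll}.

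For $(1) \Ra (2)$, I would take any monadic expansion $\CC^+$ of $\CC$ in a language $\LL^+ \supseteq \LL(\CC)$, and any model $N \models Th(\CC^+)$. The point is that the $\LL(\CC)$-reduct $N|_{\LL(\CC)}$ is automatically a model of $Th(\CC)$: any $\LL(\CC)$-sentence $\theta \in Th(\CC)$ holds in every $M \in \CC$, hence in every monadic expansion of $M$ lying in $\CC^+$, so $\theta \in Th(\CC^+) \subseteq Th(N)$, and then $\theta$ holds in the reduct. Since $N$ itself is a monadic expansion of $N|_{\LL(\CC)} \models Th(\CC)$, the assumption of monadic distality in (1) immediately gives that $Th(N)$ is distal, which is precisely what the definition of distality for the incomplete theory $Th(\CC^+)$ requires.

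For $(2) \Ra (1)$, I would simply apply (2) to the trivial monadic expansion $\CC^+ = \CC$ (adding no new unary predicates, so $\LL^+ = \LL(\CC)$), which gives that $Th(\CC)$ is distal. Then Proposition \ref{prop:distcoll}, which established the equivalence of distality and monadic distality of $Th(\CC)$ for hereditary classes of relational structures, yields (1).

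There is essentially no obstacle here, since the content has already been packed into Proposition \ref{prop:distcoll}; the lemma is mostly unfolding definitions and observing that the trivial expansion is a valid choice. If one prefers to avoid the trivial expansion (reading ``monadic expansion'' as requiring at least one new predicate), one can instead take $\CC^+$ to add a single unary predicate $U$ interpreted as empty on every structure in $\CC$; then $Th(\CC^+)$ and $Th(\CC)$ are bi-interpretable, distality transfers, and the same appeal to Proposition \ref{prop:distcoll} closes the argument.
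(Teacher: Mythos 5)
Your $(1)\Ra(2)$ direction is the same as the paper's (take the reduct of a model of $Th(\CC^+)$ and note it models $Th(\CC)$, so monadic distality applies directly), with a bit more explicit justification that the reduct models $Th(\CC)$.

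Your $(2)\Ra(1)$ direction is genuinely different from, and shorter than, the paper's. The paper starts from an arbitrary $M\models Th(\CC)$ and an arbitrary monadic $L^+$-expansion $M^+$, then builds the full class $\CC^+$ of \emph{all} $L^+$-structures whose $L$-reduct lies in $\CC$, checks that $\CC^+$ is hereditary and that $M^+\models Th(\CC^+)_\forall$, and finally invokes Proposition~\ref{prop:distcoll} together with (2) applied to $\CC^+$ to get $Th(M^+)$ distal. You instead apply (2) only to the trivial (or trivially-interpreted-unary) expansion to extract that $Th(\CC)$ itself is distal, and then let the already-established equivalence $(3)\Leftrightarrow(4)$ of Proposition~\ref{prop:distcoll} do all the work. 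Your route sidesteps the construction of $\CC^+$ and the verification that it is hereditary; in effect you observe that (2) is, after Proposition~\ref{prop:distcoll}, no stronger than its weakest instance. Both arguments are correct; yours is cleaner, while the paper's is perhaps intended to exhibit how $M^+$ fits into a suitable expanded class (a pattern that is reused elsewhere). Your caveat about whether the ``trivial'' expansion counts, and the fallback of adding an always-empty unary predicate, is the right thing to flag and the fallback does close any gap there.
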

\begin{proof}
	$(1) \Ra (2)$ Let $\CC^+$ be a monadic expansion of $\CC$, 
 i.e., a class where every element is a monadic expansion of some element of $\CC$. Let $M^+ \models Th(\CC^+)$ and let $M$ be the reduct of $M^+$ to the language of $\CC$. Then $M \models Th(\CC)$ is monadically distal, so $Th(M^+)$ is distal.
	
	$(2) \Ra (1)$ Let $M \models Th(\CC)$ and let $M^+$ be an arbitrary monadic expansion of $M$ to an $L^+$-structure.
 Let $\CC^+$ consist of  all $L^+$-structures
 $N^+$ whose $L$-reduct is in $\CC$.  As $\CC$ is hereditary, so is $\CC^+$.  Also, since $M\models Th(\CC)_\forall$, it follows that every finite $L^+$-substructure $N^+\subseteq M^+$ is an element of  $\CC^+$.  Thus,  $M^+\models Th(\CC^+)_\forall$.  But Proposition \ref{prop:distcoll}, combined with $(2)$, implies that $Th(\CC^+)_\forall$ is distal, so $Th(M^+)$ is distal as required.
	\end{proof}

Distality in hereditary classes very restrictive; for example, Ramsey's theorem implies that no hereditary graph class is distal. Since many of the good combinatorial properties of distal classes are preserved by passing to reducts, this motivates considering classes that have a distal expansion. As with distality, we are faced with choices about how much uniformity to require: we could ask only that every model of $Th(\CC)$ admit some distal expansion, or that there is a single expansion of $Th(\CC)$ with single uniform strong honest definition for every formula, or that there is an expansion $\CC^+$ of $\CC$ such that $Th(\CC^+$) is distal. There seems to be little reason to think these equivalent, but we will use the following implication.

\begin{lemma} \label{lemma:dist exp}
	Let $\CC$ be a class of relational $\LL$-structures. If $\CC$ admits an expansion to a class of $\LL^+$-structures $\CC^+$ so that $Th(\CC^+)_\forall$ is distal, then every model of $Th(\CC)_\forall$ admits a distal expansion to a model of $Th(\CC^+)_\forall$.
\end{lemma}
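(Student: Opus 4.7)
The plan is to use a compactness argument to build an $\LL^+$-expansion $M^+$ of $M$ whose $\LL^+$-diagram is compatible with $Th(\CC^+)_\forall$; then distality of $M^+$ will be immediate from the assumption that $Th(\CC^+)_\forall$ is distal (and hence every model of it is distal, by the incomplete case of Definition \ref{def:dist}).

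More concretely, fix $M \models Th(\CC)_\forall$ and work in the language $\LL^+ \cup \{c_a : a \in M\}$. Form the theory
\[
T^* \;=\; \mathrm{Diag}_\LL(M) \,\cup\, Th(\CC^+)_\forall,
\]
where $\mathrm{Diag}_\LL(M)$ is the atomic $\LL$-diagram of $M$. Any model of $T^*$ gives an $\LL^+$-expansion of $M$ sitting inside a model of $Th(\CC^+)_\forall$, which is what we want. By compactness, it suffices to show that every finite fragment of $T^*$ is satisfiable: such a fragment mentions finitely many named elements $a_1,\dots,a_n \in M$, describes a quantifier-free $\LL$-formula $\chi(\bar x)$ realized by $\bar a$ (coming from $\mathrm{Diag}_\LL(M)$), and imposes finitely many $\LL^+$-universal axioms $\forall \bar x\, \phi_i(\bar x)$ from $Th(\CC^+)_\forall$. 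The $\LL$-substructure $N \subseteq M$ generated by $a_1,\dots,a_n$ is a finite model of $Th(\CC)_\forall$, and a standard argument (the complete atomic diagram of $N$ is consistent with $Th(\CC)$, else its negation would lie in $Th(\CC)_\forall$ and be violated in $N$) shows $N$ embeds into some $L \in \CC$. By hypothesis, $L$ has an $\LL^+$-expansion $L^+ \in \CC^+$; restricting $L^+$ to the domain of $N$ gives an $\LL^+$-expansion $N^+$ embedding into $L^+$, and universal $\LL^+$-sentences true in $L^+ \in \CC^+$ pass to $N^+$. Thus $N^+$ satisfies each $\forall \bar x\, \phi_i$, which witnesses satisfiability of the finite fragment.

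Once $T^*$ is consistent, any model of it is, upon restriction to the constants naming $M$, an $\LL^+$-expansion $M^+$ of $M$ satisfying every universal sentence in $Th(\CC^+)_\forall$; hence $M^+ \models Th(\CC^+)_\forall$, and by the convention that an incomplete theory is distal if every completion (equivalently, every model's complete theory) is distal, $M^+$ is a distal expansion. The only real subtlety is the interpretation of the phrase ``$\CC$ admits an expansion to $\CC^+$,'' which I am reading as: every member of $\CC$ possesses at least one $\LL^+$-expansion lying in $\CC^+$; this is what powers the step lifting finite substructures of $M$ to structures in $\mathrm{Age}(\CC^+)$. No deeper machinery (e.g.\ strong honest definitions from Lemma \ref{lemma:dist defns}) appears to be needed—the lemma is essentially a soft compactness/ages statement, and the work is done by the preceding collapse results guaranteeing that distality at the level of $Th(\CC^+)_\forall$ propagates to arbitrary models.
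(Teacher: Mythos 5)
Your proof is correct and follows essentially the same route as the paper's: form the theory $\mathrm{Diag}_\LL(M)\cup Th(\CC^+)_\forall$, observe it is finitely satisfiable (witnessed inside expansions of members of $\CC$), take a model, and restrict to the named constants to get the desired $\LL^+$-expansion $M^+\models Th(\CC^+)_\forall$, which is distal since $Th(\CC^+)_\forall$ is. You flesh out the finite-satisfiability step more than the paper does, using that $\LL$ is relational so that finitely generated substructures of $M$ are finite and that $M\models Th(\CC)_\forall$ forces them to embed into some $L\in\CC$ via the negated-atomic-diagram trick; this is exactly the content the paper compresses into ``witnessed by structures in $\CC^+$.''
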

\begin{proof}
	Let $M \models Th(\CC)$. Consider the theory given by the atomic diagram of $M$ together with $Th(\CC^+)_\forall$. This is finitely satisfiable as witnessed by structures in $\CC^+$, and so has a model $N^+$, which embeds an $\LL^+$-expansion $M^+$ of $M$. Then $M^+ \models Th(\CC^+)_\forall$ is distal.
\end{proof}

\begin{lemma} \label{lem:ord dist}
Let $\CC$ be a class of structures such that $Th(\CC)$ is monadically NIP. If the structures in $\CC$ admit a $\emptyset$-definable linear order, then $Th(\CC)$ is distal.
\end{lemma}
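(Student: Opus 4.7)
The plan is to derive a contradiction by assuming $Th(\CC)$ is monadically NIP but not distal, and then invoking Proposition~\ref{prop:distobst} to produce an infinite totally indiscernible set, which is incompatible with a definable linear order.

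First, I would observe that Corollary~\ref{cor:distcoll1} gives the equivalence ``monadically distal $\iff$ monadically NIP $+$ distal'' (this was stated for a single theory, but applies completion-wise, so it passes to incomplete theories using the definitions of distal/monadically distal for incomplete theories from Definition~\ref{def:dist}). Thus if $Th(\CC)$ is monadically NIP but not distal, then $Th(\CC)$ is not monadically distal. Proposition~\ref{prop:distobst}, which is stated for an arbitrary (possibly incomplete) monadically NIP theory, then yields some $M\models Th(\CC)$ containing an infinite totally indiscernible set $\II=\{a_i:i\in\omega\}$ of singletons.

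Next, let $\phi(x,y)$ be the formula that defines a linear order on every structure of $\CC$. The assertion ``$\phi$ defines a linear order'' is captured by a finite set of universal first-order sentences (antisymmetry, transitivity, totality, and irreflexivity or reflexivity depending on whether the order is strict). Each such sentence lies in $Th(\CC)$, so $\phi$ still defines a linear order on $M$.

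Finally, for any two distinct $a_i,a_j\in\II$, total indiscernibility of $\II$ implies $M\models\phi(a_i,a_j)\Leftrightarrow M\models\phi(a_j,a_i)$, which directly contradicts the antisymmetry/totality axioms of a linear order on distinct points. This contradiction establishes distality of $Th(\CC)$. There is no real obstacle to overcome here: the argument is simply a chaining together of Corollary~\ref{cor:distcoll1} and Proposition~\ref{prop:distobst}, with the mild care of noting that ``definable linear order'' is a universal property that transfers from $\CC$ to all of $Th(\CC)$, and that a definable linear order cannot coexist with an infinite totally indiscernible set.
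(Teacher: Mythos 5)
Your argument is correct, but it takes a genuinely different route from the paper. The paper's proof is a two-line citation: monadic NIP implies dp-minimality, and then it invokes \cite[Example 9.20]{Guide} (Simon's result that a dp-minimal theory with a definable linear order is distal). You instead derive the result self-containedly from the machinery already developed in this section: assume $Th(\CC)$ is monadically NIP but not distal, pass to ``not monadically distal'' (your invocation of Corollary~\ref{cor:distcoll1} here is slightly more than needed --- the implication ``monadically distal $\Rightarrow$ distal'' is immediate from the definitions, so ``not distal $\Rightarrow$ not monadically distal'' needs no corollary), then apply Proposition~\ref{prop:distobst} to extract an infinite totally indiscernible set of singletons in some $M\models Th(\CC)$, and finally note that a $\emptyset$-definable linear order on $M$ (which transfers from $\CC$ to $M$ since the order axioms are sentences in $Th(\CC)$) is incompatible with total indiscernibility. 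Your approach has the advantage of avoiding the external reference to \cite{Guide} and making the proof entirely internal to the paper; the paper's citation is shorter and appeals to a more general fact about dp-minimal theories rather than re-deriving the needed special case. Both are correct and free of circularity, since Proposition~\ref{prop:distobst} and Corollary~\ref{cor:distcoll1} appear before this lemma.
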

\begin{proof}
	Since $\CC$ is monadically NIP, it is dp-minimal. Since the structures in $\CC$ are $\emptyset$-definably linearly ordered, they are distal by \cite[Example 9.20]{Guide}.
\end{proof}

We thus have a dichotomy for monadically NIP hereditary classes: either the structures in $\CC$ admit an order-expansion (i.e., a class where every element is an expansion of some element of $\CC$ by a linear order) that remains monadically NIP, which is also a distal expansion, or no distal expansion of $\CC$ can have a $\emptyset$-definable linear order. In the case where $\CC$ consists of binary relational structures, the classes that admit an order-expansion remaining monadically NIP coincides with the classes of \emph{bounded twin-width}. The twin-width of a binary structure has a combinatorial definition (see \cite{bonnet2021twin}) that we will not use except in Example \ref{ex:forest}, and a class $\CC$ has bounded twin-width if there is a uniform finite bound on the twin-width of all members. 

\begin{lemma}
    Let $\CC$ be a class of finite structures in a binary relational language. Then $\CC$ has bounded-twin width if and only if $\CC$ admits an order-expansion $\CC^<$ that is monadically NIP.
\end{lemma}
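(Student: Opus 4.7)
The plan is to derive this lemma almost entirely from known results in the twin-width literature, principally the characterization due to Bonnet, Giocanti, Ossona de Mendez, Simon, Thomass\'e, and Toru\'nczyk that for a hereditary class of ordered finite binary structures, bounded twin-width coincides with monadic NIP (equivalently, with monadic dependence). The role of the present lemma is then to strip away, or put back on, the ordering on one side of that equivalence.

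For the forward direction, suppose $\CC$ has bounded twin-width. I would invoke the fact, established in the foundational twin-width work of Bonnet, Kim, Thomass\'e, and Watrigant, that every hereditary class of binary structures of bounded twin-width admits an order-expansion $\CC^<$ that also has bounded twin-width: given a contraction sequence witnessing bounded twin-width, one refines it so that it additionally tracks a total order, paying only a bounded multiplicative cost in the red-degree bound. Applying the characterization theorem to the ordered class $\CC^<$ then yields that $\CC^<$ is monadically NIP.

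For the reverse direction, suppose $\CC$ admits an order-expansion $\CC^<$ that is monadically NIP. Then $\CC^<$ is a hereditary class of ordered finite binary structures that is monadically NIP, so by the same characterization theorem $\CC^<$ has bounded twin-width. The only remaining step is to observe that twin-width is monotone under reducts: any contraction sequence witnessing bounded twin-width for a structure $M^+\in\CC^<$ also serves as a contraction sequence for its reduct, with a red-degree bound no larger, since forgetting relations can only remove, never create, discrepancies between the parts being merged. Hence $\CC$ itself has bounded twin-width.

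The genuine mathematical content sits entirely inside the cited characterization theorem, so the main ``obstacle'' here is simply locating and citing the right statements rather than carrying out any difficult argument. The two auxiliary facts needed, namely the existence of an order-expansion of bounded twin-width and preservation of bounded twin-width under reducts, are both essentially immediate from the contraction-sequence definition, and I would verify them directly if a clean citation is not available.
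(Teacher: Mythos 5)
Your proof follows essentially the same route as the paper's: both directions hinge on the characterization (Bonnet et al.) that a hereditary class of ordered finite binary structures is monadically NIP if and only if it has bounded twin-width, supplemented in the forward direction by the existence of a bounded-twin-width order-expansion and in the reverse direction by preservation of twin-width under reducts. The one step you elide is the reduction to hereditary classes: the lemma statement does not assume $\CC$ or $\CC^<$ is hereditary, and your reverse direction in particular asserts that $\CC^<$ is ``a hereditary class of ordered finite binary structures'' without justification, even though the characterization theorem you invoke applies only to hereditary classes. The paper dispatches this at the outset by observing that both bounded twin-width and monadic NIP are preserved under closing off under substructures, so one may replace $\CC$ and $\CC^<$ by their hereditary closures; you should include this reduction explicitly, though it is a minor patch rather than a substantive flaw in the approach.
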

\begin{proof}
    Since bounded twin-width and monadic NIP are both preserved by closing under substructure, we may assume that $\CC$ and $\CC^<$ are hereditary. By \cite[Theorem 3]{bonnet2024twin}, a hereditary class of ordered binary structures is monadically NIP if and only if it has bounded twin-width.

    $(\Ra)$ Suppose $\CC$ has bounded twin-width. Then it admits an order-expansion $\CC^<$ with bounded twin-width (see \cite[Fact 2]{simon2021ordered}). Then $\CC^<$ is monadically NIP, and thus so is $\CC$.

    $(\La)$ Suppose $\CC$ admits a monadically NIP order-expansion $\CC^<$. Then $\CC^<$ has bounded twin-width, which is preserved by reducts, so $\CC$ has bounded twin-width.
\end{proof}

 Classes with bounded twin-width include, for example, the class of planar graphs, and more generally every graph class determined by forbidden minors (other than the class of all graphs) \cite{bonnet2021twin}. The second part of the following result generalizes \cite{przybyszewski2023distal}, which showed a strong honest definition for the edge relation in graph classes of bounded twin-width, although \cite{przybyszewski2023distal} also gives explicit bounds (as a function of the twin-width) on the complexity of the cell decompositions arising from the honest definition, which we do not.

Putting together the various results of this section, we obtain the following.

\begin{theorem} \label{thm:distexp}
	Let $\CC$ be a hereditary class of binary relational structures with bounded twin-width. Then $\CC$ admits a distal order-expansion, and thus so does every model of $Th(\CC)_\forall$. In the distal order-expansion of $\CC$, every formula admits a strong honest definition.
\end{theorem}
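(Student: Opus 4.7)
The plan is to chain together the results developed earlier in this section. First, apply the lemma immediately preceding the theorem that characterizes bounded twin-width in hereditary classes of binary structures: since $\CC$ has bounded twin-width, $\CC$ admits an order-expansion $\CC^<$ with $Th(\CC^<)$ monadically NIP. Since both bounded twin-width and monadic NIP are preserved under substructure, one may further arrange that $\CC^<$ is itself hereditary by closing under substructures (this only removes structures from the expansion, not from the reduct $\CC$).

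Second, since the members of $\CC^<$ come equipped with a $\emptyset$-definable linear order, and $Th(\CC^<)$ is monadically NIP, Lemma \ref{lem:ord dist} applies and yields that $Th(\CC^<)$ is distal. Third, because $\CC^<$ is a hereditary class of binary relational structures, the equivalence $(4) \Leftrightarrow (2)$ of Proposition \ref{prop:distcoll} transfers distality from $Th(\CC^<)$ to $Th(\CC^<)_\forall$. Fourth, Lemma \ref{lemma:dist exp} applied to the pair $(\CC, \CC^<)$ gives that every model $M \models Th(\CC)_\forall$ admits an expansion $M^+ \models Th(\CC^<)_\forall$, which is an order-expansion and is distal. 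In particular, $\CC$ itself (viewed as a class of models of $Th(\CC)_\forall$) receives a distal order-expansion, namely $\CC^<$.

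For the concluding sentence about strong honest definitions, invoke the equivalence $(1) \Leftrightarrow (3)$ of Lemma \ref{lemma:dist defns}: since $Th(\CC^<)$ is distal, the class $\CC^<$ admits strong honest definitions, meaning that for every formula $\phi(\xbar, \ybar)$ there is a single formula $\psi(\xbar, \zbar)$ serving uniformly as a strong honest definition across all members of $\CC^<$.

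I do not anticipate a serious obstacle here; the theorem is essentially the capstone that assembles the preceding lemmas, and the plan is simply to verify that the hypotheses line up at each step. The one bookkeeping point that merits care is ensuring $\CC^<$ can be taken hereditary so that Proposition \ref{prop:distcoll} applies, but this is a standard reduction since closing an order-expansion of $\CC$ under substructures produces another order-expansion of $\CC$ (the order restricts to substructures), and both bounded twin-width and monadic NIP are preserved under this closure.
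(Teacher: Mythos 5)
Your proposal is correct and assembles the same preceding lemmas in essentially the same way as the paper. The one minor routing difference: to get from ``$\CC^<$ monadically NIP'' to ``$Th(\CC^<)_\forall$ is distal,'' the paper first invokes \cite[Proposition 2.5]{EMonNIP} to conclude that every model of $Th(\CC^<)_\forall$ is monadically NIP and then applies Lemma~\ref{lem:ord dist} model by model, whereas you apply Lemma~\ref{lem:ord dist} to $\CC^<$ to get $Th(\CC^<)$ distal and then transfer to $Th(\CC^<)_\forall$ via Proposition~\ref{prop:distcoll}$(4)\Leftrightarrow(2)$. Both routes are valid and reach the same place, and both ultimately need $\CC^<$ hereditary, a point you rightly flag; note that the preceding lemma on twin-width already produces a hereditary $\CC^<$ (its proof explicitly reduces to the hereditary case), so your closure step is harmless but redundant. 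The remaining steps (Lemma~\ref{lemma:dist exp} for the transfer to arbitrary models of $Th(\CC)_\forall$, Lemma~\ref{lemma:dist defns} for strong honest definitions) match the paper exactly.
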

\begin{proof}
	Let $\CC$ be as assumed, and let $\CC^<$ be an order-expansion that is monadically NIP. By \cite[Proposition 2.5]{EMonNIP}, every model of $Th(\CC^<)_\forall$ is monadically NIP and so is distal by Lemma \ref{lem:ord dist}. By Lemma \ref{lemma:dist exp}, every model of $Th(\CC)_\forall$ admits a distal order-expansion. Since $Th(\CC^<)$ is distal, every formula admits a strong honest definition by Lemma \ref{lemma:dist defns}.
	\end{proof}

\begin{corollary} \label{cor:planar}
	Let $G$ be an infinite planar graph. Then $G$ admits a distal order-expansion.
\end{corollary}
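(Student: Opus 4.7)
The plan is to derive this as a direct application of Theorem \ref{thm:distexp} to the hereditary class $\CC$ of finite planar graphs. Concretely, I would let $\CC$ denote the class of all finite planar graphs viewed as binary relational structures in the language $\{E\}$. The first step is to observe that $\CC$ is hereditary: any (induced) subgraph of a finite planar graph is planar, since a planar embedding of a graph restricts to a planar embedding of any subgraph.

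Next, I would invoke the result of Bonnet, Kim, Thomass\'e, and Watrigant that the class of planar graphs has bounded twin-width (cited in the paragraph preceding Theorem \ref{thm:distexp}, where this is mentioned as the prototypical example). This places $\CC$ in the hypothesis of Theorem \ref{thm:distexp}, so $\CC$ itself admits a distal order-expansion, and moreover every model of $Th(\CC)_\forall$ admits a distal order-expansion.

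Therefore the only remaining task is to verify that the given infinite planar graph $G$ is a model of $Th(\CC)_\forall$. Since $Th(\CC)_\forall$ is axiomatized by the universal sentences true in every finite planar graph, and universal sentences are preserved under taking substructures, it suffices to show that every finite induced subgraph $H$ of $G$ lies in $\CC$, i.e., is planar. This is immediate from the definition of planarity for $G$: a planar drawing of $G$ restricts to a planar drawing of $H$. (Alternatively, one can invoke Kuratowski's theorem plus the fact that $K_5$ and $K_{3,3}$ are finite, so their absence in $G$ forces their absence in every finite subgraph.) Combining these observations, $G \models Th(\CC)_\forall$, so Theorem \ref{thm:distexp} produces the desired distal order-expansion of $G$.

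I do not anticipate any real obstacles here; the work was done in Theorem \ref{thm:distexp}, and the only thing to check is the elementary fact that planarity is preserved under finite subgraphs, which is needed to pass from the class of finite planar graphs to an individual infinite planar graph via the universal theory.
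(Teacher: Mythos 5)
Your proof is correct and is exactly the argument the paper leaves implicit: apply Theorem~\ref{thm:distexp} with $\CC$ the hereditary class of finite planar graphs (which has bounded twin-width by Bonnet et al.), and verify $G \models Th(\CC)_\forall$ because every finite induced subgraph of an infinite planar graph is a finite planar graph. One small wording note: the relevant fact is that a universal sentence holds in $G$ if and only if it holds in every finite substructure of $G$ (you invoke only the ``only if'' direction, preservation under substructure, but it is the converse that you actually need), though your stated conclusion---that it suffices to check each finite induced subgraph lies in $\CC$---is the right one.
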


The question of whether every planar graph has a distal expansion was raised to us by Artem Chernikov, and is also asked in \cite[Question 1.10]{de2025planar}.

The distal expansion of such an infinite $G$ is produced by the compactness argument of Lemma \ref{lemma:dist exp}, so in general even if we explicitly know the distal order-expansion of $Age(G)$, we do not explicitly obtain the distal order-expansion of $G$. However, in some cases we can obtain an explicit expansion, such as in the following example.

\begin{example}[(A distal expansion of the everywhere infinite forest)] \label{ex:forest}
\emph{We produce a distal expansion of the everywhere infinite forest, i.e. of the theory of an infinite acyclic graph where every point has infinite degree. For a finite forest, it is easy to describe a contraction sequence witnessing bounded twin-width (see \cite[Lemma 5]{berge2021deciding}), which can be used as described in the proof of \cite[Fact 2]{simon2021ordered} to construct the following distal order-expansion for the class of finite forests. Each finite tree is ordered by choosing an arbitrary root, and then linearly ordering the vertices so that they satisfy the following condition $(*)$: for every vertex $x$, the set consisting of $x$ and its descendants forms a convex set in the order, with $x$ as either the first or last point. A finite forest is then ordered by separately ordering each tree to satisfy $(*)$, and then making each tree a convex set. The resulting class of finite ordered forests has bounded twin-width and so is distal, and the same is true for its closure under substructure.}

\emph{We can choose an arbitrary root for each tree in the everywhere infinite forest and then construct from the root upwards a linear order that satisfies $(*)$, and complete the order to make each tree convex. This will be a distal expansion, since every finite substructure of the ordered forest will be a substructure of a finite forest ordered as above.}

\emph{Note that the proof that this is a distal expansion is reduced to combinatorics on finite forests, and does not rely on fine analysis of definability or indiscernible sequences in the infinite model.}
\end{example}

Since distality is supposed to capture order-like behavior, it may be that the second possibility in the dichotomy discussed above never occurs for a class $\CC$ admitting a distal expansion (i.e. it cannot be that no distal expansion of $\CC$ has a definable linear order). In the case of binary structures, this would yield a converse to the previous theorem, showing every hereditary class admitting a distal expansion must have bounded twin-width.

\begin{question}
	Let $\CC$ be a hereditary class of relational structures. If $\CC$ admits an expansion $\CC^+$ such that $Th(\CC^+)$ is distal, can we take the additional relations in $\CC^+$ to be solely a linear order (equivalently, to include a linear order)?
\end{question}

There is also the analogous question in the setting of complete theories.

\begin{question}
	Let $T$ be a complete monadically NIP theory and $T^+$ a distal expansion of $T$. Does $T^+$ admit an order-expansion that remains distal (or at least NIP)?
\end{question}

In the first question, the fact that $\CC$ admits a distal expansion implies that $\CC$ is monadically NIP, but this must be made explicit in the second question, since the answer is negative without this assumption. The theory $T$ described in \cite[Sections 2.1-2.3]{aschenbrenner2022distality} is a distal expansion of the theory of infinite-dimensional $\mathbb{F}_p$-vector spaces (where the expansion essentially adds a valuation), but \cite[Theorem 2.1]{shelah2012adding} shows that every order-expansion of an infinite-dimensional $\mathbb{F}_p$-vector space has IP.

\section{Not interpreting an infinite group}

In this section, we show a strengthening of the statement that if $T$ is monadically NIP, then $T$ does not interpret an infinite group. We then use this to show the existence of a (monadically) stable theory that has no distal expansion but does not interpret an infinite group. We begin with the following weakening of the notion of interpretation.

\begin{definition}[(\cite{trace})]
	Let $M, N$ be structures (in possibly different languages). Let $\tau \colon N \to M^m$ be an injection. We say that {\em $M$ trace defines $N$ (via $\tau$)} if for every $N$-definable subset $X$ of $N^n$ there is an $M$-definable subset $Y$ of $M^{mn}$ such that:
	$$\text{for all } a_1, \dots, a_n \in N, (a_1, \dots, a_n) \in X \iff (\tau(a_1), \dots, \tau(a_n)) \in Y $$
	
	Let $T, T^*$ be theories. Then $T$ {\em trace defines} $T^*$ if every $T^*$-model is trace definable in a $T$ model, and $T$ {\em trace defines} $M$ if some $T$-model trace defines $M$.
\end{definition}

In particular, if $M$ interprets $N$ then $M$ trace defines $N$.
However, if  $M$ trace defines $N$ via $\tau:N\rightarrow M^m$, 
then the image $\tau[N]$ need not be a definable subset of $M^m$.

In \cite{trace}, it was conjectured that monadically NIP theories cannot trace define an infinite group. We confirm this with the more general Proposition \ref{prop:nointerp}.

\begin{definition}
	Let $(G, \cdot)$ be a set equipped with a binary function. Then $(G, \cdot)$ is a {\em cancellative magma} if it satisfies the following conditions: for every $a,b \in G$, there is at most one $x \in G$ such that $a \cdot x = b$ and at most one $y \in G$ such that $y \cdot a = b$.	
\end{definition}

The notion of dp-rank is characterized in \cite[Theorem 4.18]{Guide}, and dp-minimality corresponds to the case of dp-rank 1. The only result about finite dp-rank that we need will be recalled within the following proof.

The following argument is similar to that given in \cite[A.6.9]{hodges1993model} that a colored linear order cannot interpret an infinite group (first given in \cite{poizat1987propos}, in the more general setting of ``local theories'').

\begin{proposition}  \label{prop:nointerp}
	Let $T$ be a theory with finite dp-rank and endless indiscernible triviality. Then $T$ does not trace define an infinite cancellative magma.
\end{proposition}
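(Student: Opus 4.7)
Suppose for contradiction that $T$ trace-defines an infinite cancellative magma $(G,\cdot)$ via an injection $\tau\colon G\to M^m$ with $M\models T$. The plan, paralleling Poizat's classical argument that a pure colored linear order cannot interpret an infinite group, is to extract an indiscernible sequence in $G$, build many distinct products using cancellativity, and exploit finite dp-rank together with endless indiscernible triviality to force an incompatibility.

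The first step is to produce the indiscernible raw material. Since $\tau(G)\subseteq M^m$ is infinite, Erd\H{o}s--Rado in a sufficiently saturated elementary extension of $M$ yields an infinite sequence $(g_i)_{i\in\omega}$ of distinct elements of $G$ whose $\tau$-images are indiscernible in $M$. By compactness this extends to longer indiscernibles, and enriches to a family $I_1,\dots,I_{n+1}$ of mutually $\tau$-indiscernible sequences $(g^{(k)}_i)_{i\in\omega}$ of distinct elements of $G$, where $n$ is the dp-rank of $T$.

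Next, form the diagonal products $c_i:=g^{(1)}_i\cdot g^{(2)}_i$; after further Ramsey, the sequence of triples $((\tau(g^{(1)}_i),\tau(g^{(2)}_i),\tau(c_i)))_{i}$ is $M$-indiscernible. The $c_i$ are pairwise distinct: if all were equal to a common $c$, mutual indiscernibility of $I_1, I_2$ would give $g^{(1)}_0\cdot g^{(2)}_1 = c = g^{(1)}_0\cdot g^{(2)}_0$, and left-cancellation would yield $g^{(2)}_0 = g^{(2)}_1$, contradicting distinctness.

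The key step combines finite dp-rank with endless indiscernible triviality to locate an undisturbed sequence. Each tuple $\tau(c_i)$ disturbs at most $n$ of the $n+1$ mutually indiscernible sequences $\tau(I_1),\dots,\tau(I_{n+1})$; so by pigeonhole over $i\in\omega$ there is some $k_0\ge 3$ and an infinite $S\subseteq\omega$ such that $\tau(I_{k_0})$ is indiscernible over each individual $\tau(c_i)$ for $i\in S$. Endless indiscernible triviality then upgrades this to indiscernibility of $\tau(I_{k_0})$ simultaneously over the whole parameter set $\{\tau(c_i):i\in S\}$.

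Finally, one extracts a contradiction with cancellativity. For any fixed $i\in S$, the indiscernibility of pairs from $I_{k_0}$ over $\tau(c_i)$ forces the truth value of the trace-definable equation ``$g^{(k_0)}_k\cdot g^{(k_0)}_{k'}=c_i$'' to be constant in $k<k'$; if it were ever true, comparing the pairs $(0,1)$ and $(0,2)$ would give $g^{(k_0)}_0\cdot g^{(k_0)}_1 = c_i = g^{(k_0)}_0\cdot g^{(k_0)}_2$, and left-cancellation would yield $g^{(k_0)}_1=g^{(k_0)}_2$, contradicting distinctness. Hence no product from $I_{k_0}$ equals any $c_i$. Iterating this analysis---treating $(c_i)_{i\in S}$ as a new indiscernible sequence mutually indiscernible with $I_{k_0}$ (after appropriate Ramsey reduction) and forming further products---should eventually produce an injection whose image requires more mutually indiscernible dimension than $T$'s finite dp-rank permits.

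The main obstacle is closing out the last step cleanly: converting the family of non-equations just established into an outright contradiction with the magma operation on $G$. Trace-definability only transfers $M$-types to $G$-types in one direction, so the explicit violation of cancellativity must be assembled carefully from iterated products, balancing the interplay between mutual indiscernibility, the finite-dp-rank disturbance bound, and triviality. Setting up the bookkeeping so that the iteration terminates in a contradiction, rather than continuing indefinitely, is the delicate core of the argument.
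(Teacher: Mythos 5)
Your proposal correctly identifies the three ingredients — extracting an indiscernible sequence inside $\tau(G)$, using the finite dp-rank bound to find an ``undisturbed'' part of the sequence, and then using endless indiscernible triviality to upgrade elementwise indiscernibility to indiscernibility over a whole parameter set — but the setup with diagonal products $c_i = g^{(1)}_i \cdot g^{(2)}_i$ over $n+1$ mutually indiscernible sequences never forces a contradiction, and you acknowledge this yourself in the final paragraph. The conclusion you reach (``no product from $I_{k_0}$ equals any $c_i$'') is a non-equation, not a contradiction, and the proposed iteration has no mechanism to terminate: forming further products only produces more non-equations. There is also a smaller technical issue: the pigeonhole argument cannot guarantee $k_0 \ge 3$, since the undisturbed sequence could well be $I_1$ or $I_2$, and the dp-rank bound for the tuple $\tau(c_i) \in M^m$ must be taken as (roughly) $mn$ rather than $n$ by subadditivity.

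The missing idea, which makes the paper's argument close cleanly, is to use a \emph{single} indiscernible sequence and form one long product $\bbar$ corresponding to $(((\abar_1\cdot\abar_2)\cdot\abar_3)\cdots\abar_K)$, where $K$ exceeds the number of ``cut points'' allowed by dp-rank for a tuple of the relevant length. Finite dp-rank then guarantees some index $i^*$ among $1,\dots,K$ at which the sequence is not disturbed by $\bbar$, and endless indiscernible triviality lets you make the sequence near $i^*$ indiscernible over $\bbar$ together with all the other factors $\abar_i$ ($i\neq i^*$). But cancellativity says every factor in the product is ``essential'': replacing $\abar_{i^*}$ by another $\abar_j$ from the indiscernible interval yields a product $\bbar'$ which, by indiscernibility, must equal $\bbar$, whence by cancellation $\abar_{i^*}=\abar_j$ — a contradiction. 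The point is to arrange that the parameter you're testing indiscernibility against is a product of elements \emph{from the sequence itself}, so that ignoring any one of them is incompatible with cancellation; your diagonal products, drawn from auxiliary sequences, never set up that tension.
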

\begin{proof}
	Let $M \models T$ and suppose $M$ trace defines an infinite cancellative magma $(G, \cdot)$ via $\tau \colon G \to M^n$.  Let $M^+$ be the expansion of $M$ by an $n$-ary relation $R$ naming $\tau(G)$. By passing to an elementary extension (and renaming $M$ and $G$), we may suppose there is an indiscernible sequence $\II = (\abar_i \in R(M^+): i \in \Q)$. We now return to working in the original language.
	
	Since $T$ has finite dp-rank, there is some fixed $K \in \omega$, depending only on $n$ and the dp-rank of the theory, such that for every $\pbar\in M^n$,  there is a finite subset $F\subseteq \R$ with $|F|<K$ such that in the induced partition of  $\Q\setminus F=\bigsqcup \set{I_i |i\le |F|}$ into open convex sets, 
 each of the subsequences $\II_i:=(\abar_q:q\in I_i)$ is indiscernible over
 $\pbar$.
 
 Let $\bbar \in \tau(G)$ correspond to the product $(((\abar_1 \cdot \abar_2) \cdot \abar_3) \cdot {\dots}  \cdot \abar_{K})$ (using the ``pushforward'' of $\cdot$ to $\tau(G)$). From above, find
 a finite subset $F\subseteq\R$ with $|F|<K$ for this $\bbar$ and choose an integer $i^*$,
 $1\le i^*\le K$ with $i^*\not\in F$.  
 Choose an open interval
 $J\subseteq \Q$ for which $J\cap \set{1,\dots,K}=\set{i^*}$ and for which $\J:=(\abar_j:j\in J)$ is indiscernible over $\bbar$.
 Since $\II$ was chosen to be indiscernible over $\emptyset$, 
 $\J=(\abar_j:j\in J)$ is indiscernible over $\set{\abar_q | q\not\in J}$.  
 By endless indiscernible triviality, $\J$ would be indiscernible over
 $\bbar\cup\set{\abar_i | 1\le i\le K, i\neq i^*}$.
 But this is impossible.  
 Choose any $j\in J\setminus\set{i^*}$.  It follows from the indiscernibility that
 $\bbar'$, which is the same product but with $\abar_{i^*}$ replaced by $\abar_j$, must be equal to $\bbar$.  By repeated applications of cancellation, we would obtain $\abar_j=\abar_{i^*}$, which is a contradiction.
 \end{proof}

	\begin{corollary} \label{cor:nogroup}
		If $T$ is a monadically NIP theory, then $T$ does not trace define an infinite cancellative magma.
	\end{corollary}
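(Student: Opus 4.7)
The plan is essentially to observe that the hypotheses of Proposition~\ref{prop:nointerp} are immediate consequences of monadic NIP, so that the corollary reduces to a direct invocation of that proposition.

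More precisely, Theorem~\ref{thm:mNIPchar} establishes that monadic NIP is equivalent to dp-minimality together with endless indiscernible triviality (this is the $(1) \Leftrightarrow (4)$ clause). Dp-minimality is by definition dp-rank $1$, which is in particular finite dp-rank. Thus if $T$ is monadically NIP, then $T$ satisfies both hypotheses of Proposition~\ref{prop:nointerp}: it has finite dp-rank and endless indiscernible triviality.

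Applying Proposition~\ref{prop:nointerp} then yields that $T$ does not trace define an infinite cancellative magma, which is exactly the statement of the corollary. There is no real obstacle here, since all the work has been done: Theorem~\ref{thm:mNIPchar} packages the relevant structural consequences of monadic NIP in precisely the form Proposition~\ref{prop:nointerp} requires. The proof is thus a one-line deduction, and there is no need to reopen the combinatorial argument involving the $K$-fold product and cancellation.
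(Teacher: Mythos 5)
Your proof is correct and is exactly the intended deduction: monadic NIP gives dp-minimality (hence finite dp-rank) and endless indiscernible triviality via Theorem~\ref{thm:mNIPchar}, and then Proposition~\ref{prop:nointerp} applies directly. The paper states the corollary without proof precisely because this one-line argument is immediate.
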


 	We remark that monadically NIP theories can have non-trivial binary functions, so long as they are not cancellative. For example, the theory of dense meet-trees in \cite[Section 2.3.1]{Guide} is monadically NIP by \cite{parigot1982theories}.

\begin{question}
    Can the hypothesis of finite dp-rank  in Proposition \ref{prop:nointerp} be weakened to NIP, or even removed altogether?
\end{question}

The following characterizes which o-minimal theories are monadically NIP.  It is noteworthy that for this class of theories, indiscernible triviality and endless indiscernible triviality coincide.

\begin{definition}  An o-minimal structure $(M,<,\dots)$ whose ordering is dense is {\em non-trivial} if there is a non-empty open interval $I\subseteq M$ and a definable, continuous function $f \colon I\times I\rightarrow M$ that is strictly monotone in both variables.
\end{definition}
 
	\begin{proposition}
		The following are equivalent for  a complete, $o$-minimal theory $T$ whose underlying order is dense. 
  \begin{enumerate}[(1)]
      \item $T$ is trivial.
      \item  $T$ has indiscernible  triviality.
      \item  $T$ has endless indiscernible triviality.
      \item $T$ is monadically NIP.
  \end{enumerate}
	\end{proposition}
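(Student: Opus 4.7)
The plan is to prove the equivalences along the cycle $(1) \Rightarrow (2) \Rightarrow (3) \Rightarrow (4) \Rightarrow (1)$. The step $(2) \Rightarrow (3)$ is immediate from the definitions, since endless indiscernible sequences form a subclass of all infinite indiscernible sequences. For $(3) \Rightarrow (4)$, I would invoke the classical fact that every o-minimal theory is dp-minimal and combine it with clause (4) of Theorem \ref{thm:mNIPchar}, which characterizes monadic NIP precisely as dp-minimality together with endless indiscernible triviality.

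For $(4) \Rightarrow (1)$, I would argue the contrapositive. If $T$ is non-trivial, there is a definable continuous $f \colon I \times I \to M$ that is strictly monotone in each variable, whence $f$ is cancellative wherever defined. The first goal is to extract from $f$ an infinite cancellative magma trace-defined by $T$: by shrinking to a small enough subinterval $J \subseteq I$ (obtained from continuity together with, if necessary, a fixed point of the diagonal $g(x) = f(x,x)$ or a composition with its inverse to move into an interval on which the image lands), one arranges $f(J \times J) \subseteq J$, producing a cancellative magma $(J, f|_{J\times J})$. Alternatively, one may invoke the classical local group theorem for non-trivial o-minimal structures (Peterzil--Starchenko), which interprets an infinite ordered group on some subinterval. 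Either way, Corollary \ref{cor:nogroup} (equivalently, Proposition \ref{prop:nointerp}) then contradicts monadic NIP.

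For $(1) \Rightarrow (2)$, I would exploit the structural consequence of triviality that algebraic closure is disintegrated: for any set $A$, every $c \in \acl(A) \setminus \acl(\emptyset)$ lies in $\acl(a)$ for some $a \in A$. Let $\II = (\abar_i : i \in I)$ be an infinite indiscernible sequence and $B$ a set with $\II$ indiscernible over each $b \in B$ individually. Suppose for contradiction $\II$ is not indiscernible over $B$; then a finite tuple $\bbar = (b_1, \dots, b_k) \in B$ and a formula $\phi$ witness the failure. By dp-minimality and indiscernibility of $\II$ over each $b_\ell$ separately, no $b_\ell$ sits in a cut filled by $\II$, so each interacts with the order on $\II$ trivially. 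Disintegration then forces the joint type of $\bbar$ over any finite subsequence of $\II$ to decompose into the individual types of the $b_\ell$'s, contradicting $\phi$.

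The main obstacle is the rigorous execution of $(1) \Rightarrow (2)$: converting the conceptual picture of $\acl$-disintegration plus the Cut Lemma into a genuine preservation of indiscernibility requires care in how tuple-types are assembled from singleton-types over an indiscernible sequence, and in particular one must ensure the argument applies in the ``internal'' case where elements of $I$ contain no maximum or minimum relevant to $\bbar$. A cleaner route is to invoke structural results for trivial o-minimal theories (in the spirit of Mayer and Pillay) exhibiting them as essentially reducts of densely ordered sets with independent unary data, from which indiscernible triviality follows directly and, in particular, coincides with its endless counterpart.
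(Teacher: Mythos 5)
Your proposal follows the same cycle $(1)\Rightarrow(2)\Rightarrow(3)\Rightarrow(4)\Rightarrow(1)$ as the paper, and the steps $(2)\Rightarrow(3)$ and $(3)\Rightarrow(4)$ are identical to the paper's (o-minimal $\Rightarrow$ dp-minimal, then Theorem \ref{thm:mNIPchar}). The other two steps need attention.

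For $(4)\Rightarrow(1)$: your primary idea of shrinking to a sub-interval $J$ with $f(J\times J)\subseteq J$ does not work in general. Consider $f(x,y)=x+y$ in an ordered group: no interval around the diagonal fixed point $0$ is closed under $f$, since $f((-\epsilon,\epsilon)^2)=(-2\epsilon,2\epsilon)$, and composing with an inverse does not help. The alternative route you offer --- citing Peterzil--Starchenko's local group theorem to show $M$ trace-defines an infinite group, then applying Corollary \ref{cor:nogroup} --- is exactly one of the paper's two arguments. (The paper's other argument is more elementary: build an infinite $G\subseteq I$ on which $f$ is injective by adding points one at a time, using that $f\mr{(G_n\cup\{g\})^2}$ remains injective for all but finitely many $g$; this directly produces a tuple-coding configuration as in Definition \ref{def:coding}.)

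For $(1)\Rightarrow(2)$: the sketch using disintegration of $\acl$ is the genuine gap. Disintegration of algebraic closure by itself does not force the type of a tuple $\bbar$ over a finite subsequence of $\II$ to decompose into the types of individual $b_\ell$'s; you would need a structure theorem, not just the definition of triviality, to assemble tuple-types from singleton-types, and your paragraph does not provide one. You correctly identify the fix at the end: for trivial o-minimal theories one should invoke the result of Mekler--Pillay (not Mayer--Pillay) that the theory is \emph{binary}, i.e., the type of any $n$-tuple is determined by the types of its 2-element subsequences. Once binarity is in hand, indiscernible triviality follows by a direct check on pairs (pairs within $\II$ by indiscernibility of $\II$, pairs mixing $\II$ and a $b\in B$ by indiscernibility of $\II$ over $b$, pairs within $B$ trivially), which is exactly the paper's proof of $(1)\Rightarrow(2)$ via Corollary \ref{cor:binary}. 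So the ``cleaner route'' you mention is not merely cleaner --- it is the one that actually closes the argument.
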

	\begin{proof}
  $(1)\Rightarrow(2)$  Suppose that $T$ is o-minimal and trivial.  It follows from Lemmas~2.1 and 2.2 of 
  \cite{mekler1992dedekind} 
  that $T$ is {\em binary}, i.e. for every $M\models T$, the type of any $n$-tuple of elements is determined by the union of the types of 
  its 2-element subsequences.
  Clearly, any binary theory $T$ must satisfy indiscernible triviality. 

  $(2)\Rightarrow(3)$ is immediate. As $T$ is o-minimal, it is dp-minimal,  and so $(3)\Rightarrow(4)$ follows from
  Theorem~\ref{thm:mNIPchar}.

  $(4) \Ra (1)$ Assume that $(1)$ fails, i.e., there is a model $M\models T$ and a definable, continuous function $f\colon I^2\rightarrow M$ that is strictly monotone in both variables.
  There are two arguments to show that $T$ is not monadically NIP.  On the one hand, note that one can construct, element by element, an infinite $G\subseteq I$ for which $f\mr{G^2}$ is 1-1.  [If $G_n\subseteq I$ is finite and $f\mr{G_n^2}$ is 1-1, then $f\mr {(G_n\cup \{g\})^2}$ is 1-1 for all but finitely many $g\in I$.]  This directly describes a tuple-coding configuration as in Definition \ref{def:coding}, contradicting monadic NIP.  Alternately, quoting Theorem~1.1 of \cite{KobiSergei}, we see that $M$ trace defines an infinite group, hence $T$ is not monadically NIP by Corollary~\ref{cor:nogroup}.
	\end{proof}

    Together with \cite{KobiSergei} and \cite[Theorem 0.7]{trace}, this result shows that endless indiscernible triviality at least agrees with the correct notion of triviality for the Zilber dichotomy among o-minimal theories using trace definability. However, we think it unlikely to give a dichotomy for dp-minimal theories. The leveled tree structure $M_{C, \rm{lev}}$ appearing in \cite[Lemma 2.8, Proposition 2.9]{almazaydeh2024omega} is dp-minimal but does not have endless indiscernible triviality. Since $M_{C, \rm{lev}}$ is homogeneous in a finite relational language, \cite{macpherson1991interpreting} shows it does not interpret an infinite group, and we think it unlikely it trace defines an infinite group.

	We now turn to the existence of a monadically stable theory with no distal expansion that does not interpret an infinite group. This example seems to be folklore to some extent (\cite{gentle} credits Pierre Simon with asking for the relevant graph class), and for its analysis it is already enough to know that monadically stable theories have trivial forking  \cite{baldwin1985second}. But since it has not appeared explicitly in the literature, we record it here as a corollary of Corollary \ref{cor:nogroup}.
	
	\begin{definition}
		Let $\CC$ be a class of graphs. Then $\CC$ is {\em somewhere dense} if for some $d$, there are $d$-subdivisions of arbitrarily large complete graphs appearing as (not necessarily induced) subgraphs of graphs in $\CC$. Otherwise, $\CC$ is {\em nowhere dense}. We will say a single infinite graph $G$ is nowhere dense if $\set{G}$ is.
	\end{definition}
	
	In \cite{podewski1978stable}, it is shown that if $G$ is nowhere dense (there called superflat), then it is monadically stable. We now introduce a combinatorial condition that is necessary for $M$ to have a distal expansion \cite[Corollary 4.10]{chernikov2018regularity}.
	
	\begin{definition}
		Given a class of structures $\CC$ and a formula $\phi(\xbar, \ybar)$, we say {\em $\phi$ has the strong Erd\H os-Hajnal property in $\CC$} there is a fixed $\delta > 0$ such that for every $M \in \CC$ and finite $A \subset M^{|\xbar|}, B \subset M^{|\ybar|}$, there are $A' \subset A, B' \subset B$ such that $|A'| \geq \delta |A|, |B'| \geq \delta |B|$ and either $A' \times B' \subset \phi(M)$ or $(A' \times B') \cap \phi(M) = \emptyset$.
	\end{definition}

	\begin{lemma}[({\cite[Theorem 50]{gentle}, \cite[Proposition 5]{EH}})] \label{lemma:nonEH}
		There is a nowhere dense class of finite graphs $\mathscr{R}$ such that the edge relation does not have the strong Erd\H os-Hajnal property.
	\end{lemma}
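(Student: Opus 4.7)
My plan is to produce $\mathscr{R}$ via a probabilistic construction using sparse Erd\H os-R\'enyi random graphs with edge probability slightly above $1/n$. For each integer $k \geq 2$, pick a sufficiently large $n_k$, sample $G_k \sim G(n_k, p_k)$ with $p_k = n_k^{-1 + 1/k}$, and set $\mathscr{R} := \{G_k : k \geq 2\}$. The key tension is between sparseness (needed for nowhere denseness) and having enough edges in every linear-size induced bipartite subgraph (needed for failure of strong Erd\H os-Hajnal); taking $p_k$ just above $1/n_k$ strikes the balance.

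\textbf{Step 1 (nowhere dense).} For $d \geq 1$ and $t \geq 3$, the expected number of copies of the $d$-subdivision $T(d,t)$ of $K_t$ as a subgraph of $G(n, n^{-\alpha})$ is at most $n^{t + d \binom{t}{2} - \alpha (d+1) \binom{t}{2}}$, which tends to $0$ as soon as $\alpha > d/(d+1)$ and $t$ is sufficiently large. Since $\alpha_k := 1 - 1/k$ satisfies $\alpha_k > d/(d+1)$ for all $k > d+1$, one can fix a single $t_d$ that works for all $G_k$ with $k > d+1$ simultaneously, and then enlarge $t_d$ to absorb the finitely many remaining (finite!) graphs. Thus $\mathscr{R}$ excludes $T(d, t_d)$ for every $d$ and is nowhere dense.

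\textbf{Step 2 (failure of strong EH).} Fix $\delta > 0$ and take $k$ large enough that $\delta \geq 1/k$. For any fixed pair $A', B' \subseteq V(G_k)$ of sizes $\geq \delta n_k$, the probability of spanning no edges is $(1-p_k)^{|A'||B'|} \leq \exp(-\delta^2 n_k^{1 + 1/k})$, which beats the union bound $\binom{n_k}{\delta n_k}^2 \leq 2^{2 n_k H(\delta)}$ once $n_k$ is large. The ``all edges'' case is ruled out since the total expected edge count $O(n_k^{1+1/k})$ is far below $\delta^2 n_k^2$. So with $A = B = V(G_k)$, the graph $G_k$ exhibits failure of strong Erd\H os-Hajnal at parameter $\delta$.

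\textbf{Step 3 (diagonalization).} By a union bound over the finitely many bad events per $G_k$ (one for each $d \leq k$ forbidding $T(d, t_d)$, plus one forbidding any homogeneous pair of density $\geq 1/k$), choosing $n_k$ large enough at each stage makes the total failure probability less than $1$, so an actual deterministic sequence $(G_k)$ with all the required properties exists. The main obstacle is precisely this quantitative balancing: $p_k$ must be large enough to kill empty bipartite subgraphs of linear size yet small enough to kill all $T(d, t_d)$ with $t_d$ independent of $k$. The admissible density range $p = n^{-1 + o(1)}$ is narrow but nonempty, and the schedule $p_k = n_k^{-1 + 1/k}$ is designed to thread it while letting the first-moment threshold $\alpha_k > d/(d+1)$ open up for every $d$ as $k$ grows.
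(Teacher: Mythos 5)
Your probabilistic construction is essentially the approach of the second reference the paper cites (\cite[Proposition 5]{EH}); the paper itself proves the lemma only by citation, and in the remark following it explicitly notes that one reference (\cite{gentle}) uses expander graphs of increasing expansion while \cite{EH} uses a probabilistic construction. Your proof correctly threads the density window: the first-moment bound for copies of $T(d,t)$ in $G(n, n^{-\alpha})$ has exponent $t - (\alpha(d+1) - d)\binom{t}{2}$, which is negative for $t$ large once $\alpha > d/(d+1)$, and the schedule $\alpha_k = 1 - 1/k$ opens this up for each fixed $d$ as soon as $k > d+1$, with the finitely many earlier $G_k$ absorbed by enlarging $t_d$ after the fact. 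The anti-concentration argument in Step~2 and the union bound in Step~3 are also sound, with $\delta_k = 1/k \to 0$ ensuring no single $\delta$ works for the whole class. This is a legitimate self-contained reproof by the same probabilistic method the paper defers to, whereas the alternative route via expanders (which the paper also cites) is deterministic and yields a different flavor of example: expanders give explicit witnesses and the sparseness/pseudorandomness needed for both nowhere denseness and the failure of strong EH comes from spectral expansion rather than from first-moment calculations.
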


 We remark that the class constructed in \cite[Theorem 50]{gentle} comes from taking expander graphs of increasing expansion, while the class in \cite[Proposition 5]{EH} comes from a probabilistic construction.
	
	\begin{corollary} \label{cor:nodist}
		There is a monadically stable theory without a distal expansion that does not trace define an infinite cancellative magma (so in particular, does not interpret an infinite group).
	\end{corollary}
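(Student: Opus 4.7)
The plan is to exhibit the required theory as the theory of a suitable ultraproduct of the class $\mathscr{R}$ from Lemma~\ref{lemma:nonEH}. Let $G$ be a non-principal ultraproduct of the finite graphs in $\mathscr{R}$, and set $T := Th(G)$. By \L os's theorem, each finite $M \in \mathscr{R}$ embeds as a subgraph of $G$, and the property of being nowhere dense transfers from $\mathscr{R}$ to $G$: for each $d$, containing a $d$-subdivision of $K_n$ is witnessed by a fixed finite configuration, so if no graph in $\mathscr{R}$ contains a $d$-subdivision of $K_n$, then neither does $G$. Hence $G$ is nowhere dense, and is therefore monadically stable by \cite{podewski1978stable}; in particular $T$ is monadically stable and hence monadically NIP.

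Applying Corollary~\ref{cor:nogroup}, $T$ does not trace define an infinite cancellative magma, and in particular does not interpret an infinite group.

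To see that $T$ admits no distal expansion, suppose towards a contradiction that $T^+$ is a distal expansion of $T$ in some extended language, and let $G^+ \models T^+$ be an expansion of $G$. By \cite[Corollary 4.10]{chernikov2018regularity}, the edge formula $E(x,y)$ has the strong Erd\H os--Hajnal property in the class of models of $T^+$, with some constant $\delta > 0$. Fix any finite $M \in \mathscr{R}$; then $M$ sits as a substructure of $G^+$ with the same edge relation. Applying SEH in $G^+$ to arbitrary finite $A, B \subseteq M$ produces $A' \subseteq A$ and $B' \subseteq B$ of size at least $\delta|A|$ and $\delta|B|$ that are homogeneous for $E$. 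This shows $E$ has SEH in $\mathscr{R}$ with constant $\delta$, contradicting Lemma~\ref{lemma:nonEH}.

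The main delicacy is making sure that the failure of SEH in the finite class $\mathscr{R}$ genuinely transfers to a failure of SEH in any distal expansion of $T$; this works because the edge relation is already in the original language and is preserved both under the ultraproduct construction (so every $M\in\mathscr{R}$ embeds into $G$, and hence into $G^+$) and under restriction to substructures (so SEH witnesses in $G^+$ restrict to SEH witnesses inside each $M$).
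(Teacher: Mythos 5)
Your overall strategy mirrors the paper's (take an infinite model built from $\mathscr{R}$, check nowhere density to get monadic stability, apply Corollary~\ref{cor:nogroup}, and lift the failure of the strong Erd\H{o}s--Hajnal property from $\mathscr{R}$ to rule out a distal expansion), but your choice of infinite model introduces a genuine gap. You claim that ``by \L os's theorem, each finite $M \in \mathscr{R}$ embeds as a subgraph of $G$,'' where $G$ is a non-principal ultraproduct of $\mathscr{R}$. This does not follow from \L os's theorem. For a fixed $M$, the sentence $\sigma_M$ asserting the existence of an induced copy of $M$ holds in $G$ if and only if $\{i : M_i \models \sigma_M\} \in \mathcal{U}$. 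You only know that $M$ itself satisfies $\sigma_M$, i.e., that this set contains a single index, which is useless against a non-principal ultrafilter. There is no reason to expect an arbitrary graph in $\mathscr{R}$ (built, per the references, from expanders or random graphs) to embed into most of the others, so the claimed embedding of $M$ into $G$, and hence into $G^+$, is unjustified. Since your entire SEH-transfer argument hinges on restricting the homogeneous rectangle in $G^+$ back down to $A,B \subseteq M$, this breaks the proof.

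The paper avoids this issue by taking $\sqcup\mathscr{R}$, the disjoint union of all graphs in $\mathscr{R}$, rather than an ultraproduct. There each $M \in \mathscr{R}$ literally sits as a connected component, so the finite counterexamples to SEH at parameter $1/n$ are present as honest finite subsets, and the first-order sentences recording them belong to $T = Th(\sqcup\mathscr{R})$, hence hold in any (reduct of a) model of a putative distal expansion $T^+$. Nowhere density of $\sqcup\mathscr{R}$ follows just as in your ultraproduct argument (a $d$-subdivision of $K_n$ is connected, hence lands in one component). Your proof could be repaired either by switching to the disjoint union, or by choosing the ultrafilter with more care (e.g., arranging the finite intersection property among the sets $\{i : M_i \models \chi_{n,m_n}\}$ for sentences $\chi_{n,m_n}$ witnessing failure of SEH at $\delta = 1/n$), but as written the appeal to \L os's theorem is a non sequitur.
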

	\begin{proof}
		Let $\mathscr{R}$ be a graph class as in Lemma \ref{lemma:nonEH}, let $\sqcup \mathscr{R}$ be the disjoint union of the graphs in $\mathscr{R}$, and let $T := Th(\sqcup \mathscr{R})$. Then $\sqcup \mathscr{R}$ is nowhere dense, and so $T$ is monadically stable, and so does not trace define an infinite cancellative magma by Corollary \ref{cor:nogroup}. The edge relation in $\sqcup \mathscr{R}$ also does not have the strong Erd\H os-Hajnal property, and so $T$ does not admit a distal expansion.
	\end{proof}

 \bibliographystyle{alpha}
\bibliography{Bib}

\affiliationone{
   Samuel Braunfeld\\
   Informatick\'y \'ustav Univerzity Karlovy, Malostransk\'e n\'am. 25, 118 00 Prague\\
   Czech Republic
   \email{sbraunfeld@iuuk.mff.cuni.cz} \\
   The Czech Academy of Sciences, Institute of Computer Science, Pod Vod\'arenskou v\v{e}\v{z}\'i 2, 18200 Prague\\
   Czech Republic
   \email{braunfeld@cs.cas.cz}}
   \affiliationtwo{
   Michael C. Laskowski\\
   Department of Mathematics\\ University of Maryland \\College Park, MD 20742 \\
   USA
   \email{laskow@umd.edu}}

\end{document}